\documentclass[12pt,reqno]{amsart}

\usepackage[T1]{fontenc}
\usepackage[utf8]{inputenc}
\usepackage[english]{babel}
\usepackage{amsthm,amssymb,amsmath}
\usepackage{hyperref}

\newcommand{\Syl}{\mathrm{Syl}}

\newtheorem{Theorem}{Theorem}[section]
\newtheorem{Lemma}[Theorem]{Lemma}
\newtheorem{Corollary}[Theorem]{Corollary}
\newtheorem{Proposition}[Theorem]{Proposition}

\newtheorem{athm}{Theorem}

\theoremstyle{Definition}

\newtheorem{Example}{Example}
\newtheorem{Conjecture}{Conjecture}

\theoremstyle{Remark}
\newtheorem{Remark}{Remark}

\DeclareMathOperator{\Aut}{Aut}

\title{Simple Skew Braces with Cyclic Sylow Subgroups}

\author[Marco Damele]{Marco Damele}
\thanks{Dipartimento di Matematica, Universit\`a di Cagliari, Via Ospedale 72, 09124 Cagliari, Italy; \texttt{marco.damele@unica.it}; ORCID 0009-0008-3088-5766}

\date{}

\usepackage{amsthm}

\begin{document}

\maketitle
\vspace{-1.8em}

\begin{abstract}
We study simplicity and splitting phenomena in finite skew braces
under cyclicity assumptions on Sylow subgroups. We first classify finite simple skew braces whose
multiplicative group is a \(Z\)-group. If the additive group is
soluble, then the skew brace is either trivial of prime order or
isomorphic to one of the two simple skew braces of order \(12\) with
additive group \(A_4\) and multiplicative group \(C_3\rtimes C_4\). If
the additive group is insoluble, then it is necessarily isomorphic to
\(\operatorname{PSL}_2(p)\) for some prime \(p\geq5\). This conclusion
is sharp, since such examples exist for every prime \(p\geq5\).

We then consider the more general situation in which only a Sylow
subgroup corresponding to the smallest prime divisor \(p\) of the
order is assumed to be cyclic. Under suitable hypotheses on the
additive or multiplicative Sylow \(p\)-subgroup, we prove that the skew
brace contains a Hall \(p'\)-ideal and splits as a semidirect product
of this ideal with a Sylow \(p\)-subbrace. As a consequence, every
finite simple skew brace satisfying one of these hypotheses is trivial
of prime order. Moreover, as a consequence of our splitting theorem, we verify Byott's solvability conjecture for finite skew braces whose additive group has a cyclic Sylow $2$-subgroup.

\end{abstract}

\medskip

\noindent\textbf{2020 Mathematics Subject Classification.}
Primary 16T25; Secondary 20D10, 20D20, 20E22.

\medskip

\noindent\textbf{Keywords.}
Skew brace, simple skew brace, \(Z\)-group, cyclic Sylow subgroup,
splitting criterion, Schur--Zassenhaus theorem.

\section{Introduction}
A \emph{skew brace} is a triple \(B=(B,+,\cdot)\) such that
\((B,+)\) and \((B,\cdot)\) are groups on the same underlying set \(B\),
called respectively the additive group and the multiplicative group of
\(B\), and
\[
a\cdot(b+c)=a\cdot b-a+a\cdot c
\]
for all \(a,b,c\in B\). 

Skew braces were introduced by Guarnieri and Vendramin in
\cite{Guarnieri_2016} as a non-commutative generalization of the
braces introduced by Rump in \cite{Rump2007}.
They provide an algebraic framework for studying non-degenerate
bijective set-theoretic solutions of the Yang--Baxter equation.
Since their introduction, skew braces have also appeared naturally in
connection with regular subgroups of holomorphs, bijective
\(1\)-cocycles, exact factorizations of groups, Hopf--Galois
structures and matched products; see, for instance,
\cite{Guarnieri_2016,SmoktunowiczVendramin2018}.

A natural problem in the structure theory of skew braces is
to understand how properties of either of the two underlying groups
influence the internal structure of the skew brace.

To the best of the author's knowledge, only a few results are currently available in this direction. Structural restrictions for two-sided skew braces were obtained in \cite{TrappeniersTwoSided,DamelePerfectTwoSided}. In \cite{DameleZGroups}, the author initiated the systematic study of finite skew braces whose additive group is a \(Z\)-group and proved, in particular, that they are supersoluble.

Among the structural properties considered in this context,
simplicity occupies a particularly important position. Recall that a
skew brace is called \emph{simple} if it is non-zero and has no
non-zero proper ideals. The construction and classification of finite
simple braces have been investigated by several authors, using
extensions, matched products and asymmetric products; see
\cite{BachillerSimpleBraces,
BachillerCedoJespersOkninski,
CedoJespersOkninski}.
More recently, Byott constructed an infinite family of finite simple
skew braces which are not braces and do not arise directly from
non-abelian simple groups; see \cite{ByottSimpleSkewBraces}.

Despite these advances, a general classification of finite simple skew
braces appears to be far beyond reach. It is therefore natural to ask
which group-theoretical properties of \((B,+)\) or \((B,\cdot)\)
force the existence of a non-trivial proper ideal.

A first basic example is provided by skew braces whose multiplicative
group is abelian. In this case the skew brace is two-sided. The
structure theorem for finite simple two-sided skew braces obtained by
Trappeniers in \cite{TrappeniersTwoSided} then implies that a finite
simple skew brace with abelian multiplicative group is necessarily
isomorphic to \(C_p\), endowed with the trivial skew brace structure,
for some prime \(p\).

In \cite{DameleZGroups}, the author initiated the systematic study of
finite skew braces for which one of the two underlying groups is a
\(Z\)-group. It was proved there that every finite skew brace whose
additive group is a \(Z\)-group is supersoluble. In particular, such a
skew brace can be simple only in the trivial prime-order case.

Moreover, it was shown that, for skew braces of odd order, the
additive group is a \(Z\)-group if and only if the multiplicative group
is a \(Z\)-group. Consequently, finite skew braces of odd order whose
multiplicative group is a \(Z\)-group are supersoluble as well.

The purpose of the present paper is to study the remaining, and
substantially more delicate, even-order case. More precisely, we
investigate finite simple skew braces whose multiplicative group is a
\(Z\)-group.

Our first main result gives a complete classification when the
additive group is soluble. We denote by \(S_{12,22}\) and
\(S_{12,23}\) the two simple skew braces of order \(12\) appearing in
the classification of skew braces of small order in
\cite{KonovalovSmoktunowiczVendramin}. Both have additive group
isomorphic to \(A_4\) and multiplicative group isomorphic to the
non-abelian semidirect product \(C_3\rtimes C_4\).

\begin{athm}\label{thm:A}
Let \(B\) be a finite simple skew brace of soluble type such
that \((B,\cdot)\) is a \(Z\)-group. Then $B \simeq C_{p}$ or $B\simeq S_{12,22}$ or $B\simeq S_{12,23}$.
\end{athm}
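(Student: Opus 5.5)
Write $A=(B,+)$, soluble, and $G=(B,\cdot)$, a $Z$-group (all Sylow subgroups cyclic). Then $G$ is metacyclic: $G\cong C_m\rtimes C_n$ with $\gcd(m,n)=1$, $G'=C_m$ a cyclic Hall subgroup and $G/G'$ cyclic. The strategy is to use the three standard sources of ideals in a skew brace: the socle, the annihilator-type subgroups, and characteristic subgroups of $A$ that are $\lambda$-invariant and normal in $G$. Recall also that $\lambda\colon G\to\Aut(A)$ is a homomorphism with $\lambda_a(b)=-a+ab$.

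\textbf{Step 1: find a $\lambda$-invariant minimal normal subgroup.} Since $A$ is soluble, we would take $M$ a minimal characteristic subgroup of $A$. Then $M$ is elementary abelian, $M\cong C_q^k$, and it is $\lambda$-invariant because $\lambda_a\in\Aut(A)$. Hence $M$ is a left ideal, so $(M,\cdot)$ is a subgroup of $G$ of order $q^k$. A $q$-subgroup of a $Z$-group is cyclic, and $(M,\cdot)$ is additively elementary abelian, so the sub-skew-brace $M$ has cyclic multiplicative and elementary abelian additive group.

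\textbf{Step 2: use $p$-braces to bound the Sylow structure.} For a $q$-group skew brace, $\lambda(M)$ is a $q$-subgroup acting on $M$ and fixing a nonzero vector; by the usual argument $\mathrm{Soc}(M)\neq 0$. We aim to show that simplicity forces the ideal generated by $M$ to equal $B$. The closure of $M$ under $G$-conjugation is an ideal candidate, since it is already $\lambda$-invariant and additively normal. Thus $M$ being non-normal in $G$ is the key constraint. Since $(M,\cdot)$ is a cyclic $q$-subgroup that is not normal in $G$, and $G'$ is a normal Hall subgroup, we get $q\nmid m$. So $q$ divides only $n$, $M$ sits inside a complement $C_n$, and this forces $|M|=q$, because $(M,+)$ and $(M,\cdot)$ have the same order, the latter is cyclic, and a Sylow argument in the other direction applies. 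Combining with $\mathrm{Fit}(A)$, we would show $\mathrm{Fit}(A)$ is a $\lambda$-invariant characteristic subgroup whose multiplicative group lies in a Hall $\pi$-subgroup; this reduces to a bounded configuration.

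\textbf{Step 3: pin down the order.} Next we compare $\lambda(G)\le \Aut(A)$ with $G/\ker\lambda$. Here $\ker\lambda$ is a normal subgroup of $G$ that is a left ideal and an additive subgroup. Simplicity pushes it to $0$ or $B$; in the latter case $B$ is trivial and hence $C_p$. So $\lambda$ is faithful and $G$ embeds in $\Aut(A)$. Using $F=\mathrm{Fit}(A)$ elementary abelian of order $q^k$ and $G$ acting faithfully with cyclic Sylows, we expect to force $q=2$, $k=2$ and $A/F$ to be a $3$-group acting on $C_2^2$, hence $A\cong A_4$ and $|B|=12$. The main obstacle is precisely this step: controlling the soluble $A$ whose holomorph contains a regular metacyclic $Z$-group with no proper $\lambda$-invariant $G$-normal subgroup. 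I would first try an induction showing that $A$ must have Fitting length $2$ with $A/F$ cyclic, by exploiting that $G/G'$ is cyclic and that $G'$ acts on $F$. Once $|B|=12$ with $A\cong A_4$, the classification in \cite{KonovalovSmoktunowiczVendramin} yields $S_{12,22}$ or $S_{12,23}$, and we check that these are the only simple ones with $Z$-group multiplicative group.
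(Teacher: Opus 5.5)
Your proposal does not reach the theorem, and two of its concrete claims fail in exactly the exceptional examples the theorem produces.

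\textbf{Step~2.} You conclude that a non-normal $(M,\cdot)$ forces $|M|=q$. This fails for $q=2$. In $S_{12,22}$ the minimal characteristic subgroup of $(B,+)\simeq A_4$ is $M\simeq C_2\times C_2$. It is a left ideal, and its multiplicative group is a non-normal Sylow $2$-subgroup $C_4$ of $C_3\rtimes C_4$. For odd $q$ the claim would follow from Kohl's theorem, but you never invoke it, and the ``Sylow argument in the other direction'' is not an argument.

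\textbf{Step~3.} You treat $\ker\lambda$ as an ideal, so that simplicity would make $\lambda$ faithful. But $\ker\lambda$ is only a normal subgroup of $(B,\cdot)$ that is also an additive subgroup; it need not be normal in $(B,+)$. In $S_{12,22}$ the group $C_3\rtimes C_4$ cannot embed in $\Aut(A_4)\simeq S_4$, whose only subgroup of order $12$ is $A_4$. So there $\ker\lambda$ is a non-zero proper subset of a simple skew brace: it has order $2$ or $3$ and is not normal in $A_4$. The embedding $G\hookrightarrow\Aut(A)$ on which your last step rests is therefore incompatible with the conclusion you want.

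You also leave unjustified that the multiplicative normal closure of $M$ is an additive, $\lambda$-invariant subgroup, and that $F(B,+)$ is elementary abelian.

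\textbf{What is missing.} The real content, the reduction to $|B|=12$, is left as an expectation, and your outline contains none of the steps that achieve it.
\begin{enumerate}
\item You need the additive Sylow structure forced by $(B,\cdot)$ being a $Z$-group. Via Sylow subbraces and the results of Kohl and Byott on holomorphs, odd Sylow subgroups of $(B,+)$ are cyclic. A Sylow $2$-subgroup of $(B,+)$ is cyclic, dihedral, generalized quaternion or $C_2\times C_2$, so $3$ is the only odd prime that can divide $|\Aut(O_2(B,+))|$.
\item The odd-order case must be disposed of separately; the paper uses supersolubility and a Hall $p'$-ideal.
\item One shows $O_{2'}(B,+)=1$. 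Take the largest odd $q$ with $O_q(B,+)\neq1$. The Fitting centralizer theorem, together with the bound on the primes dividing $|\Aut(F(B,+))|$, shows that $q$ is the largest prime divisor of $|B|$. Hence the multiplicative Sylow $q$-subgroup is normal and cyclic, and the subgroup of order $q$ of $O_q(B,+)$ is an ideal.
\item With $F(B,+)=O_2(B,+)$, compare $K=\Omega_1(Z(O_2(B,+)))$, of order at most $4$, with the subgroups of $(B,\cdot)=N\rtimes C_{2^n}$ of order $2^k$, $k\le n-s$. Here $2^s$ is the order of the image of $C_{2^n}$ in $\Aut(N)$, and these subgroups are central and unique of their order. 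This excludes $n-s\geq2$, and also $|K|=2$ when $n-s=1$.
\item The divisibility $|(B,+)/F(B,+)|\mid|\Aut(F(B,+))|$ finishes the argument. When $n=s$ it gives $|B|=6$, which is impossible. When $n-s=1$ it gives $F(B,+)\simeq C_2\times C_2$, $|N|=3$ and $n=2$, so $|B|=12$.
\end{enumerate}
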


The proof combines restrictions on the additive Sylow subgroups with
the structure of finite supersoluble groups. We first show that the
additive group has no non-trivial normal subgroup of odd order. Its
Fitting subgroup is therefore a \(2\)-group and contains a small
characteristic elementary abelian subgroup.

On the multiplicative side, a \(Z\)-group of even order has a normal
Hall \(2'\)-subgroup and a cyclic Sylow \(2\)-subgroup. Comparing the
small characteristic \(2\)-subgroups of the additive group with the
central \(2\)-subgroups of the multiplicative group produces a proper
ideal, except in one configuration. The remaining case forces
\(|B|=12\), and the conclusion follows from the classification of
simple skew braces of that order.

We next consider the case in which the additive group is insoluble.
The classification of Suzuki and Wong of finite insoluble groups whose
Sylow subgroups of odd order are cyclic and whose Sylow
\(2\)-subgroups contain a cyclic subgroup of index at most \(2\) plays
a decisive role.

\begin{athm}\label{thm:B}
Let \(B\) be a finite simple skew brace such that \((B,\cdot)\) is a
\(Z\)-group. If \((B,+)\) is insoluble, then
$(B,+)\simeq\operatorname{PSL}_2(p)$
for some prime \(p\geq5\).
\end{athm}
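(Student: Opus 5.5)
The plan is to transfer the Sylow structure of \((B,\cdot)\) to \((B,+)\), then use the Suzuki--Wong classification to pin down the insoluble group \((B,+)\), and finally use simplicity of \(B\) to discard every candidate except \(\operatorname{PSL}_2(p)\).

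\emph{Step 1: Sylow structure of \((B,+)\).} For every prime \(q\), the Sylow \(q\)-subgroups of \((B,\cdot)\) are cyclic because \((B,\cdot)\) is a \(Z\)-group. By the standard fact that a multiplicative Sylow subgroup corresponds to a subset of the same order, \((B,+)\) has Sylow \(q\)-subgroups of the same order. For odd \(q\) I expect the additive Sylow \(q\)-subgroups to be cyclic, by the same local argument as in the odd-order comparison of \cite{DameleZGroups}. That argument compares a \(q\)-subbrace with its two groups: a skew brace of \(q\)-power order whose multiplicative group is cyclic has additive group with a cyclic subgroup of index at most \(q\), and for odd \(q\) the lambda-action forces cyclicity. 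For \(q=2\) the same analysis only gives that an additive Sylow \(2\)-subgroup contains a cyclic subgroup of index at most \(2\). This covers the cases \(C_{2^n}\), dihedral, generalized quaternion, semidihedral, modular and \(C_{2^{n-1}}\times C_2\).

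\emph{Step 2: apply Suzuki--Wong.} The group \((B,+)\) is insoluble, its odd Sylow subgroups are cyclic, and its Sylow \(2\)-subgroups have a cyclic subgroup of index \(\le 2\). So the Suzuki--Wong classification applies: \((B,+)\) has a normal series \(O(G)\le G_0\le G\), where \(O(G)\) is the largest normal subgroup of odd order. Here \(G_0/O(G)\) is \(\operatorname{PSL}_2(q)\) or \(\operatorname{SL}_2(q)\) with \(q\) such that the odd Sylow subgroups are cyclic, which forces \(q=p\) prime, and \(G/G_0\) is small (of order at most \(2\) times a cyclic odd-order quotient). Also \(PGL_2(p)\)-type extensions arise. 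I would then use simplicity of \(B\) to cut this down.
\begin{itemize}
\item[(a)] Show that \(O((B,+))=1\). The idea, already used for Theorem~\ref{thm:A}, is that the normal Hall subgroups of odd order of the \(Z\)-group \((B,\cdot)\) interact with characteristic odd-order subgroups of \((B,+)\) to produce a proper ideal. I expect to reuse that lemma verbatim, since it did not use solubility.
\item[(b)] Eliminate \(\operatorname{SL}_2(p)\)-sections. Their central involution gives a characteristic subgroup of order \(2\) in \((B,+)\). This subgroup is \(\lambda\)-invariant, so it is a left ideal and is normal in \((B,+)\). I would show it is an ideal by comparing it with the unique involution of the cyclic Sylow \(2\)-subgroup of \((B,\cdot)\), which is central in \((B,\cdot)\) modulo the normal Hall \(2'\)-subgroup.
\item[(c)] Eliminate proper overgroups \(G_0<G\). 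Here \(G_0\) is characteristic in \((B,+)\), hence a left ideal, and I would make it an ideal via \((B,\cdot)\) being metacyclic with a normal Hall \(2'\)-subgroup. Alternatively, the quotient \((B,\cdot)\to\) a cyclic group forces a \(\lambda\)-invariant normal subgroup of the corresponding index.
\end{itemize}

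\emph{Main obstacle.} The hardest step is (c), in particular ruling out \((B,+)\simeq \operatorname{PGL}_2(p)\) and, in Step 1, the \(2\)-part. In \(\operatorname{PGL}_2(p)\) the subgroup \(\operatorname{PSL}_2(p)\) is characteristic of index \(2\), so it is a left ideal. The difficulty is proving that it is also normal in \((B,\cdot)\). My first attempt is the following: a left ideal \(I\) of index \(2\) satisfies \(b I = b+I\), so \(I\) is a subgroup of index \(2\) in \((B,\cdot)\) and hence normal there. Index-\(2\) subgroups are always normal, so \(I\) would be an ideal, contradicting simplicity. The same index argument handles any characteristic subgroup \(N\) with \((B,+)/N\) small, provided I check that \(b\cdot N=b+N\) for left ideals. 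Since these checks reduce to index and normality considerations, I expect Step 1 for \(q=2\) to be the genuine bottleneck.
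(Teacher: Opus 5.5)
\textbf{Gap.} Your overall architecture matches the paper: transfer the Sylow structure, apply Suzuki--Wong, remove the odd-order factor, remove an index-$2$ overgroup, and exclude $\operatorname{SL}_2(p)$. Your index-$2$ argument in (c) is exactly the paper's Step~2. However, steps (a) and (b) do not go through as proposed.

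\textbf{Step (a).} This step rests on a false claim: the proof of $O_{2'}(B,+)=1$ in Theorem~\ref{thm:A} \emph{does} use solubility. It invokes the Fitting centralizer theorem $C_{(B,+)}(F(B,+))\le F(B,+)$ to show that the chosen prime $q$ is the largest prime divisor of $|B|$. Only this makes the multiplicative Sylow $q$-subgroup normal, and hence the order-$q$ subgroup normal in $(B,\cdot)$.

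In the insoluble setting $G_1=L\times M$, the Fitting centralizer theorem is unavailable (indeed $L$ centralizes $F(M)$). Moreover, the largest prime divisor of $|B|$ may divide $|L|$ rather than $|M|$. Then nothing forces a characteristic subgroup of $(B,+)$ lying in $M$ to be normal in $(B,\cdot)$.

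The missing idea is the generalized lambda map. The subgroup $M=O_\pi(B,+)$ is characteristic. Every odd prime dividing $|\operatorname{Aut}((B,+)/M)|$ divides $|L|$, because $\operatorname{Aut}(L)\simeq\operatorname{PGL}_2(p)$ and the kernel of restriction to $L$ has order at most $|Z(L)|\le 2$. Hence $\gcd(|M|,|\operatorname{Aut}((B,+)/M)|)=1$, so $\lambda_M(M)=1$. Therefore $M$ is an ideal, and simplicity gives $M=0$.

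\textbf{Step (b).} Here you note that $K=Z(\operatorname{SL}_2(p))$ is the unique involution of a cyclic Sylow $2$-subgroup $P$ of $(B,\cdot)$ and is central modulo the normal Hall $2'$-subgroup $N$. This gives nothing, since $(B,\cdot)/N$ is cyclic anyway. In $N\rtimes P$ that involution is central if and only if $P$ acts unfaithfully on $N$, which you have not shown and which is not automatic for $Z$-groups.

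Since $K\le Z(B,+)$ is $\lambda$-invariant, $K\trianglelefteq(B,\cdot)$ is equivalent to $K\le\ker\lambda$; this is what must be proved. The paper proves it by observing that $\lambda(P)$ is a cyclic $2$-subgroup of $\operatorname{Aut}(\operatorname{SL}_2(p))\simeq\operatorname{PGL}_2(p)$. The Sylow $2$-subgroups of $\operatorname{PGL}_2(p)$ are dihedral of order $|P|$, so $|\lambda(P)|\le|P|/2$. Thus the kernel of $\lambda|_P$ is non-trivial and so contains $K$. This gives $K\le\operatorname{Soc}(B)$, and Proposition~\ref{prop:subbrace-socle-ideal} then makes $K$ a proper non-zero ideal.

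\textbf{The bottleneck.} Your identified ``bottleneck'' is misplaced. Step~1 is settled by applying, to Sylow subbraces, Kohl's theorem for odd $q$ and Byott's theorem for $q=2$ (cyclic, dihedral, generalized quaternion, or $C_2\times C_2$). That is exactly the input Suzuki--Wong needs.

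Also, Suzuki--Wong yields $[G:G_1]\le 2$ with no additional odd cyclic quotient. This matters, because a left ideal of index greater than $2$ need not be normal in $(B,\cdot)$. So your index argument does not extend to ``any characteristic subgroup with small quotient'', as you suggest.
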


Indeed, the Suzuki--Wong theorem provides a normal subgroup of index
at most \(2\) whose non-soluble component is isomorphic either to
\(\operatorname{PSL}_2(p)\) or to \(\operatorname{SL}_2(p)\),
together with a coprime soluble \(Z\)-group factor. We show that the
latter factor is characteristic and eliminate it by means of the
generalized lambda map.

A characteristic subgroup of index \(2\) would also give rise to a
proper ideal. This reduces the additive group to
\(\operatorname{PSL}_2(p)\) or \(\operatorname{SL}_2(p)\). Finally,
the central involution of \(\operatorname{SL}_2(p)\) is forced into the
socle by comparing the cyclic Sylow \(2\)-subgroup of
\((B,\cdot)\) with the \(2\)-subgroups of
\(\operatorname{Aut}(\operatorname{SL}_2(p))\). This excludes the
quasisimple case.

The conclusion of Theorem~\ref{thm:B} is sharp. Using a construction of Tsang
\cite{TsangSimpleType}, one obtains, for every prime \(p\geq5\), a
finite simple skew brace \(B_p\) such that
$(B_p,+)\simeq\operatorname{PSL}_2(p)$
and \((B_p,\cdot)\) is a \(Z\)-group.

Theorems~\ref{thm:A} and~\ref{thm:B} show that the assumption that the entire
multiplicative group is a \(Z\)-group is remarkably restrictive. In
the second part of the paper, we weaken this assumption and require
only that a Sylow subgroup corresponding to the smallest prime divisor
of \(|B|\) be cyclic.

Our third main result is the following unified splitting theorem.

\begin{athm}\label{thm:C}
Let \(B\) be a finite skew brace, and let \(p\) be the smallest prime
divisor of \(|B|\). Suppose that one of the following conditions
holds:
\begin{enumerate}
    \item a Sylow \(p\)-subgroup of \((B,+)\) is cyclic;
    \item \(p\) is odd and a Sylow \(p\)-subgroup of
    \((B,\cdot)\) is cyclic;
    \item \(p=2\), \(3\nmid |B|\), and a Sylow \(2\)-subgroup of
    \((B,\cdot)\) is cyclic.
\end{enumerate}
Then \(B\) contains a Hall \(p'\)-ideal \(H\). Moreover, there exists
a Sylow \(p\)-subbrace \(P\) of \(B\) such that
$B\simeq H\rtimes P.$
\end{athm}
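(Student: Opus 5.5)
The plan is to reduce all three cases to one situation: one of the two groups, $(B,+)$ or $(B,\cdot)$, has a normal Hall $p'$-subgroup $N$. From $N$ we then build an ideal of $B$.

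\textbf{Step 1: a normal $p$-complement.} In case (1), a Sylow $p$-subgroup $P_+$ of $(B,+)$ is cyclic and $p$ is the smallest prime divisor of $|B|$. The standard Burnside argument applies. $N_{(B,+)}(P_+)/C_{(B,+)}(P_+)$ embeds in $\Aut(P_+)$, which has order $p^{k-1}(p-1)$. It is also a $p'$-group, since $P_+\le C(P_+)$, and all of its prime divisors are at least $p$. Hence it is trivial, and Burnside's transfer theorem gives a normal $p$-complement $N_+$ of $(B,+)$. The same argument applies in case (2) to $(B,\cdot)$. In case (3) it applies to $(B,\cdot)$ as well: it only needs $p$ to be the smallest prime, and $p=2$ is. We obtain a normal Hall $2'$-subgroup $N_\cdot$ of $(B,\cdot)$. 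The problem is then to pass to the other group structure.

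\textbf{Step 2: from a normal Hall subgroup of one group to an ideal.} In case (1), $N_+$ is characteristic in $(B,+)$, since it is the unique Hall $p'$-subgroup. Each $\lambda_a$ is an automorphism of $(B,+)$, so $\lambda_a(N_+)=N_+$. Because $N_+$ is $\lambda$-invariant, it is a left ideal, and therefore a subgroup of $(B,\cdot)$. It is normal in $(B,+)$. Normality in $(B,\cdot)$ should follow from a Hall-subgroup counting argument. $N_+$ is a Hall $p'$-subgroup of $(B,\cdot)$. For $b\in B$, consider $b N_+ b^{-1}$, computed multiplicatively. Using $ab=a+\lambda_a(b)$ and the $\lambda$-invariance of $N_+$, we get $bN_+ = b+N_+$. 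The right coset $N_+b$ is also a subset of size $|N_+|$. Both cosets have the same image modulo $N_+$ in the $p$-group $(B,+)/N_+$, so they coincide. Hence $N_+$ is an ideal.

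Cases (2) and (3) are the main obstacle. There $N_\cdot$ is normal in $(B,\cdot)$, but we do not know that it is $\lambda$-invariant or additively closed. My first attempt is to use the $\lambda$-action: $\lambda\colon(B,\cdot)\to\Aut(B,+)$. I would look at additive Sylow $q$-subgroups for $q\ne p$ and show that the additive group has a normal Hall $p'$-subgroup, using that $(B,+)$ and $(B,\cdot)$ have the same order and that $\lambda_{P_\cdot}$ acts on $(B,+)$. Concretely:
\begin{enumerate}
\item Take a multiplicative Sylow $p$-subgroup $P_\cdot$ and show it can be chosen to be a subbrace. This uses the known existence of Sylow subbraces, via a $\lambda$-invariant Sylow subgroup of $(B,+)$, so that $(P_\cdot,+)$ is an additive Sylow $p$-subgroup.
\item Use the cyclicity of $(P_\cdot,\cdot)$ to control $(P_\cdot,+)$. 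For $p$ odd, a skew brace of $p$-power order with cyclic multiplicative group has cyclic additive group, a classical Rump/Bachiller-type fact. This reduces case (2) to case (1).
\end{enumerate}
For $p=2$, a cyclic multiplicative $2$-group can have a dihedral, quaternion or semidihedral additive group, and this is where the hypothesis $3\nmid|B|$ enters. An additive Sylow $2$-subgroup $S$ of this kind has $\Aut(S)$ a $\{2,3\}$-group (the $3$ only from $Q_8$). Together with $2$ being smallest, Frobenius' normal $p$-complement criterion then gives $N_{(B,+)}(Q)/C_{(B,+)}(Q)$ a $2$-group for every $2$-subgroup $Q$. So $(B,+)$ has a normal $2$-complement, and Step 2 applies.

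\textbf{Step 3: splitting.} Once $H=N_+$ is an ideal of order $|B|_{p'}$, take a Sylow $p$-subbrace $P$, which exists by the Sylow theory for skew braces invoked above. Then $H\cap P=0$ and $|H||P|=|B|$. Hence $B=H+P=HP$ in both structures, with $H$ an ideal, and this is exactly the semidirect product decomposition $B\simeq H\rtimes P$.
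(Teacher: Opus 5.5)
\textbf{Gap in Step~2.} This step is the heart of case~(1), and the key claim in it is unjustified. You assert that $N_+\cdot b$ and $b+N_+$ have the same image in $(B,+)/N_+$. This does not follow from counting. Since $n\cdot b=n+\lambda_n(b)$, the image of $n\cdot b$ is $\lambda_n(b)+N_+$. So your claim says exactly that every $\lambda_n$ with $n\in N_+$ induces the identity on $(B,+)/N_+$, i.e.\ $N_+\le\ker(\lambda_{N_+})$ for the generalized lambda map. By Proposition~\ref{lemmaKer}, that is equivalent to $N_+$ being an ideal, which is what you are trying to prove.

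It also does not follow from $N_+$ being a $\lambda$-invariant normal Hall subgroup with $p$-group quotient. In $S_{12,22}$, the Klein four-group $V\trianglelefteq A_4=(B,+)$ is characteristic, hence a left ideal, and its quotient is $C_3$. Yet $V$ is not an ideal, because $S_{12,22}$ is simple; here $(V,\cdot)\simeq C_4$ acts non-trivially on $(B,+)/V$ through $\Aut(C_3)\simeq C_2$. In that example $3$ is not the smallest prime, and that is exactly the hypothesis your argument never uses.

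\textbf{The missing idea is coprimality.} $\lambda$ induces a homomorphism from $(N_+,\cdot)$ to $\Aut((B,+)/N_+)\simeq\Aut(C_{p^a})$, a group of order $p^{a-1}(p-1)$. Every prime divisor of $|N_+|$ exceeds $p$, so this homomorphism is trivial. This is the paper's generalized lambda-map argument (Proposition~\ref{IdealCoprimeAut}). It is where cyclicity and minimality of $p$ are really used. You computed $|\Aut(P_+)|$ in Step~1 but never applied it here.

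\textbf{Cases (2) and (3).} Once this is repaired, your outline works, and these cases are easier than you make them. Step~1 already gives a normal Hall $p'$-subgroup $N_\cdot$ of $(B,\cdot)$. As soon as $(B,+)$ has a normal Hall $p'$-subgroup $N_+$, the left ideal $N_+$ is a Hall $p'$-subgroup of $(B,\cdot)$, hence equals $N_\cdot$ and is multiplicatively normal. No coset argument is needed; this is how the paper handles case~(3).

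\textbf{Frobenius argument.} Using Frobenius' criterion is a valid substitute for the paper's appeal to Huppert's theorem. But it needs the restriction from Theorem~\ref{thm:Byott-cyclic-element}: the additive Sylow $2$-subgroup is cyclic, dihedral, generalized quaternion, or $C_2\times C_2$. Semidihedral groups do not occur, and $\Aut(C_2\times C_2)\simeq S_3$ also contributes a $3$. You must also apply it to every $2$-subgroup $Q$, not just the Sylow subgroup. That is fine, since subgroups of these groups are again of these types.

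\textbf{Smaller points.} Existence of Sylow subbraces should simply be cited. A $\lambda$-invariant Sylow subgroup of $(B,+)$ need not exist: in the almost trivial skew brace on $S_3$, $\lambda$ acts by conjugation. Your Step~3 is fine: take any Sylow $p$-subbrace and count orders. Given the existence of Sylow subbraces, which the paper itself cites, this is a correct and more elementary replacement for the skew-brace Schur--Zassenhaus theorem used in the paper.
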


The proof first produces a normal Hall \(p'\)-subgroup of the additive
group. The generalized lambda map is then used to prove that this
subgroup is an ideal. Once the Hall ideal has been detected, the
Schur--Zassenhaus theorem for finite skew braces
\cite{DameleSchurZassenhaus2026,
FerraraTrombettiSchurZassenhaus2026}
provides a Sylow \(p\)-subbrace complement.

For odd \(p\), the passage from a cyclic multiplicative Sylow
\(p\)-subgroup to a cyclic additive Sylow \(p\)-subgroup follows from
Kohl's result on elements of maximal order in holomorphs. When \(p=2\),
the corresponding theorem of Byott allows the additive Sylow
\(2\)-subgroup to be cyclic, dihedral, generalized quaternion or
isomorphic to \(C_2\times C_2\).

The additional hypothesis \(3\nmid |B|\) in the third case is
essential. The simple skew braces \(S_{12,22}\) and \(S_{12,23}\) have
cyclic multiplicative Sylow \(2\)-subgroups, but they do not admit a
Hall \(2'\)-ideal.

As an immediate consequence of Theorem~\ref{thm:C}, a finite simple skew brace
satisfying any of its hypotheses is necessarily isomorphic to \(C_p\).
Thus, the classification results of the first part and the splitting
criteria of the second part describe two complementary aspects of the
same phenomenon: cyclic Sylow subgroups frequently force the existence
of proper ideals, while the exceptional configurations are governed by
a very restricted collection of finite groups. As a further consequence of Theorem~\ref{thm:C}, we obtain a new
positive instance of Byott's solvability conjecture, which predicts
that the multiplicative group of a finite skew brace is soluble
whenever its additive group is soluble. More precisely, we prove that
the conjecture holds whenever the additive group has a cyclic Sylow
$2$-subgroup.

The paper is organized as follows. In Section~\ref{Preliminaries}, we collect the
necessary preliminaries on skew braces, generalized lambda maps,
\(Z\)-groups, regular subgroups of holomorphs and supersoluble groups.
In Section~\ref{Section simple multiplicative Z group}, we study finite simple skew braces whose multiplicative
group is a \(Z\)-group. The soluble and insoluble additive cases are
treated separately, yielding Theorems~\ref{thm:A} and~\ref{thm:B}. 
In Section~\ref{Section Splitting criteria}, we prove the unified
splitting criterion of Theorem~\ref{thm:C}, derive its consequences
for simple skew braces, establish a new positive instance of Byott's
solvability conjecture, and discuss the necessity of the additional
restriction in the case \(p=2\).

\section{Preliminaries} \label{Preliminaries}

\subsection{Basics on skew braces}

We recall some basic definitions and properties of skew braces that
will be used throughout the paper. Let $(B,+,\cdot)$ be a skew brace as in the introduction.
The identity elements of $(B,+)$ and $(B,\cdot)$ coincide and will be denoted by
\(0\). For every \(a\in B\), define $\lambda_a:B\longrightarrow B$ by 
$\lambda_a(b)=-a+a\cdot b.$
The skew brace identity implies that $\lambda_a\in\operatorname{Aut}(B,+)$
for every \(a\in B\), and that the map $\lambda:(B,\cdot)\longrightarrow\operatorname{Aut}(B,+)$ given by $\lambda(a)=\lambda_{a}$
is a group homomorphism. Moreover, the lambda map allows one to express each of the two group
operations in terms of the other. Indeed, for all \(a,b\in B\),
\[
a\cdot b=a+\lambda_a(b)
\qquad\text{and}\qquad
a+b=a\cdot\lambda_a^{-1}(b).
\]
It is also convenient to introduce the operation
$a*b=\lambda_a(b)-b$
for all \(a,b\in B\). If \(H\) and \(K\) are subsets of \(B\), we denote by $H*K=\bigl\langle h*k \mid h\in H,\ k\in K\bigr\rangle_+$
the additive subgroup of \((B,+)\) generated by all elements \(h*k\), with
\(h\in H\) and \(k\in K\).
A subset \(C\subseteq B\) is called a \emph{subbrace} of \(B\) if $(C,+)\leq(B,+)$ and $(C,\cdot)\leq(B,\cdot).$ 
A subgroup \(I\leq(B,+)\) is called a \emph{left ideal} of \(B\) if it
is invariant under the lambda action, that is,
$\lambda_b(I)=I$
for every \(b\in B\). We record the following useful observation.

\begin{Proposition}\label{lem:characteristic-left-ideal}
Let \(B\) be a skew brace and let
\(H\) be a left ideal of \(B\). Then $H\leq(B,\cdot).$
\end{Proposition}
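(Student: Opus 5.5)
The plan is to check directly that \(H\) is closed under the multiplication and under multiplicative inverses. We only use the formulas \(a\cdot b=a+\lambda_a(b)\), the fact that each \(\lambda_a\) is an automorphism of \((B,+)\), and the fact that \(\lambda\) is a homomorphism. Since \(B\) is finite throughout the paper, closure under the product alone would already suffice. Still, I will also handle inverses, so that the argument works for arbitrary skew braces.

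First, \(0\in H\), and \(0\) is the common identity of the two groups. Next, take \(a,b\in H\). Then \(a\cdot b=a+\lambda_a(b)\). Because \(H\) is a left ideal, \(\lambda_a(b)\in\lambda_a(H)=H\). Since \(H\) is an additive subgroup, it follows that \(a\cdot b\in H\).

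For inverses, write \(\bar a\) for the multiplicative inverse of \(a\in H\). The identity \(0=a\cdot\bar a=a+\lambda_a(\bar a)\) gives \(\lambda_a(\bar a)=-a\). Hence \(\bar a=\lambda_a^{-1}(-a)=\lambda_{\bar a}(-a)\), using that \(\lambda\) is a homomorphism, so \(\lambda_a^{-1}=\lambda_{\bar a}\). Now \(-a\in H\), and \(H\) is stable under \(\lambda_a^{-1}\), because \(\lambda_a(H)=H\) is assumed as an equality. Therefore \(\bar a\in H\), and \(H\leq (B,\cdot)\).

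There is no real obstacle here. The only point needing care is the inverse step: one must express the inverse through \(\lambda_a^{-1}\) and use the invariance of \(H\) under \(\lambda_a^{-1}\), rather than under some \(\lambda_{\bar a}\) with \(\bar a\) not yet known to lie in \(H\). This is resolved because the definition of left ideal requires \(\lambda_b(I)=I\) for every \(b\in B\), which gives invariance under \(\lambda_a^{-1}\) directly.
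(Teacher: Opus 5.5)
Your proof is correct and is essentially the paper's argument: closure under products via $a\cdot b=a+\lambda_a(b)$, and closure under inverses via $\bar a=\lambda_a^{-1}(-a)$ together with $\lambda_a(H)=H$. Your side remark that $\lambda_a^{-1}=\lambda_{\bar a}$ is correct but not needed.
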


\begin{proof}
If \(h,k\in H\), then $h\cdot k=h+\lambda_h(k)\in H$ and  $h_{\cdot}^{-1}=\lambda_h^{-1}(-h)\in H$ and thus \(H\leq(B,\cdot)\).
\end{proof}

A subset \(I\subseteq B\) is called an \emph{ideal} of \(B\), and we write $I \trianglelefteq B$, if $I$ is a left ideal which is normal in both the additive and
multiplicative groups.

\begin{Proposition}\label{lem:left-ideal-criterion} Let \(B\) be a skew brace and let \(I\) be a left ideal of \(B\) such that \(I\trianglelefteq(B,+)\). Then \(I\) is an ideal of \(B\) if and only if $I*B\leq I.$
\end{Proposition}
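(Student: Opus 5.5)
The plan is to show that, once \(I\) is a left ideal that is normal in \((B,+)\), normality of \(I\) in \((B,\cdot)\) is equivalent to \(I*B\leq I\). Both directions will come from a single identity linking multiplicative conjugation of \(I\) to additive conjugation and the lambda action.

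First we record what the hypotheses give for free. Since \(I\) is a left ideal, Proposition~\ref{lem:characteristic-left-ideal} gives \(I\leq(B,\cdot)\). Hence \(I\) is an ideal if and only if \(b\cdot I\cdot b^{-1}\subseteq I\) for all \(b\in B\), where \(b^{-1}\) is the multiplicative inverse. Equivalently, \(b\cdot I= I\cdot b\) for all \(b\), since \(I\) is a multiplicative subgroup and \(B\) is finite or, in general, by applying the condition to both \(b\) and \(b^{-1}\).

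Next we compute the two cosets in additive terms. For \(b\in B\) and \(x\in I\) we have \(b\cdot x=b+\lambda_b(x)\), and \(\lambda_b(I)=I\), so
\[
b\cdot I=b+I.
\]
On the other side, \(x\cdot b=x+\lambda_x(b)=x+b+(b*?)\). More precisely, \(\lambda_x(b)=x*b+b\), so
\[
x\cdot b=x+(x*b)+b.
\]
Using additive normality of \(I\), we get \(I+b=b+I\). Therefore \(I\cdot b\subseteq b+I=b\cdot I\) holds for all \(b\) if and only if \(x+(x*b)\in I\) for all \(x\in I\) and \(b\in B\). Since \(x\in I\), this is equivalent to \(x*b\in I\) for all \(x\in I\) and \(b\in B\), that is, to \(I*B\leq I\).

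Finally we upgrade the inclusion to equality. The inclusion \(I\cdot b\subseteq b\cdot I\) for all \(b\) implies equality, because both sets are in bijection with \(I\). Concretely, \(x\mapsto x\cdot b\) and \(x\mapsto b\cdot x\) are injective, so the two cosets have the same cardinality when \(I\) is finite. In the infinite case we instead apply the inclusion with \(b^{-1}\) and conjugate: from \(I\cdot b^{-1}\subseteq b^{-1}\cdot I\) we get \(b\cdot I\subseteq I\cdot b\). Thus \(I\) is normal in \((B,\cdot)\) exactly when \(I*B\leq I\), which proves both directions.

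The only delicate point is this equality step, and the \(b^{-1}\) trick handles it uniformly, so no real obstacle is expected.
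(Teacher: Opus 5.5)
Your proof is correct and is essentially the paper's argument: rewrite $b\cdot I=b+\lambda_b(I)=b+I=I+b$, expand $x\cdot b=x+(x*b)+b$ to get that $I\cdot b\subseteq b\cdot I$ is equivalent to $I*B\leq I$, and then upgrade this inclusion to equality. Your $b^{-1}$ argument for the last step is slightly better than the paper's cardinality count, since the proposition is stated for arbitrary (possibly infinite) skew braces. Before writing it up, delete the garbled fragment $x+b+(b*?)$ that precedes the corrected identity.
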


\begin{proof}
Suppose first that \(I\) is an ideal of \(B\). Let \(i\in I\) and
\(b\in B\). Since \(I\trianglelefteq(B,\cdot)\), we have
\(i\cdot b\in b\cdot I\). Moreover, since \(I\) is a left ideal we have $b\cdot I=b+\lambda_b(I)=b+I=I+b,$
where the last equality follows from \(I\trianglelefteq(B,+)\).
Hence $i+\lambda_i(b)=i\cdot b\in I+b.$
Since \(i\in I\), it follows that \(\lambda_i(b)\in I+b\), and therefore $i*b=\lambda_i(b)-b\in I.$
Thus \(I*B\leq I\). Conversely, suppose that \(I*B\leq I\). For \(i\in I\) and \(b\in B\),
we have \(\lambda_i(b)=i*b+b\in I+b\), and hence
\[
i\cdot b=i+\lambda_i(b)\in I+b=b+I=b\cdot I.
\]
Therefore \(I\cdot b\subseteq b\cdot I\). Since both sets have
cardinality \(|I|\), we obtain \(I\cdot b=b\cdot I\) for every
\(b\in B\). Thus \(I\trianglelefteq(B,\cdot)\), and hence \(I\) is an
ideal of \(B\).
\end{proof}

If \(I\) is an ideal of \(B\), then the quotient set \(B/I\) inherits
a skew brace structure defined by
\[
(a+I)+(b+I)=a+b+I
\qquad\text{and}\qquad
(a+I)\cdot(b+I)=a\cdot b+I.
\]
A skew brace \(B\) is called \emph{simple} if $|B|>1$ and its only ideals are \(0\) and \(B\).  The \emph{socle} of a skew brace \(B\) is defined by $\operatorname{Soc}(B)
=
\ker\lambda\cap Z(B,+),$
where $\ker\lambda
=
\{a\in B:\lambda_a=\operatorname{id}_B\}.$
In particular, the additive and multiplicative operations coincide on
\(\operatorname{Soc}(B)\).
It is well known that \(\operatorname{Soc}(B)\) is an ideal of \(B\). The following Proposition will be useful later.

\begin{Proposition}\label{prop:subbrace-socle-ideal}
Let \(B\) be a skew brace and let \(C\leq\operatorname{Soc}(B)\). Then
\(C\) is an ideal of \(B\) if and only if \(C\) is invariant under the
lambda action, that is, $\lambda_b(C)=C$
for every \(b\in B\).
\end{Proposition}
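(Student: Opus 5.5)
The forward direction is immediate: an ideal is by definition a left ideal, so \(\lambda_b(C)=C\) for every \(b\in B\). For the converse, suppose \(C\leq\operatorname{Soc}(B)\) is invariant under every \(\lambda_b\). Here \(C\leq\operatorname{Soc}(B)\) means that \(C\) is a subgroup of \((B,+)\) contained in the socle. Then \(C\) is a left ideal by hypothesis. The plan is to check normality in \((B,+)\) directly, and then get multiplicative normality from Proposition~\ref{lem:left-ideal-criterion} by showing \(C*B\leq C\).

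Additive normality is immediate because \(C\subseteq\operatorname{Soc}(B)\subseteq Z(B,+)\). Every element of \(C\) is additively central, so \(b+c-b=c\in C\) for all \(b\in B\) and \(c\in C\). Hence \(C\trianglelefteq(B,+)\).

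For the condition \(C*B\leq C\), take \(c\in C\) and \(b\in B\). Since \(C\subseteq\ker\lambda\), we have \(\lambda_c=\operatorname{id}_B\), so
\[
c*b=\lambda_c(b)-b=b-b=0\in C.
\]
Thus all generators of \(C*B\) vanish, so \(C*B=0\leq C\). Proposition~\ref{lem:left-ideal-criterion} then shows that \(C\) is an ideal of \(B\).

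No step is a real obstacle. The only point to watch is that the subgroup hypotheses needed by Proposition~\ref{lem:left-ideal-criterion} hold: \(C\) must be an additive subgroup (it is) and a left ideal (by hypothesis). We also never use that \(\operatorname{Soc}(B)\) itself is an ideal.
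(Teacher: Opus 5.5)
Your proof is correct, but it obtains multiplicative normality by a different route from the paper. The additive step is the same: both you and the paper get \(C\trianglelefteq(B,+)\) from \(C\leq Z(B,+)\). The paper then does not use Proposition~\ref{lem:left-ideal-criterion}. Instead it asserts directly that \(b\cdot c\cdot b_{\cdot}^{-1}=\lambda_b(c)\) for \(c\in C\) and \(b\in B\), so \(\lambda\)-invariance of \(C\) is literally invariance under conjugation in \((B,\cdot)\).

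That formula is the more informative statement. It says that on the socle, multiplicative conjugation coincides with the lambda action, which explains why \(\lambda\)-invariance is exactly the right condition. However, its one-line justification in the paper uses more than \(\lambda_c=\operatorname{id}_B\). For \(c\in\ker\lambda\) one has \(c\cdot x=c+x\), hence \(b\cdot c\cdot b_{\cdot}^{-1}=b+\lambda_b(c)-b\). One still needs \(\lambda_b(c)\in Z(B,+)\), which holds because \(\lambda_b\in\operatorname{Aut}(B,+)\).

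Your route hands this bookkeeping to Proposition~\ref{lem:left-ideal-criterion}, where \(\lambda_c=\operatorname{id}_B\) kills every generator of \(C*B\) at once. Every step therefore rests on a previously proved result. It also makes visible that additive centrality is used only to get \(C\trianglelefteq(B,+)\): the same argument shows that any \(\lambda\)-invariant normal subgroup of \((B,+)\) contained in \(\ker\lambda\) is an ideal.

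There is one minor caveat if \(B\) may be infinite. The paper's proof of Proposition~\ref{lem:left-ideal-criterion} ends with a counting argument. That step should be replaced by the remark that \(C\cdot b\subseteq b\cdot C\) for all \(b\in B\) already forces normality. The paper's direct computation needs no such adjustment.
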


\begin{proof}
Suppose first that \(C\) is invariant under the lambda action. Since $C\leq\operatorname{Soc}(B)\leq Z(B,+),$
we have $C\trianglelefteq(B,+).$ Let \(c\in C\) and \(b\in B\). Since \(c\in\operatorname{Soc}(B)\), we
have \(\lambda_c=\operatorname{id}_B\). Hence
$b\cdot c\cdot b_{\cdot}^{-1}=\lambda_b(c).$
By the lambda invariance of \(C\), it follows that $b\cdot c\cdot b_{\cdot}^{-1}\in C.$
Therefore $C\trianglelefteq(B,\cdot).$
Thus \(C\) is an ideal of \(B\).
Conversely, if \(C\) is an ideal of \(B\), then it is invariant under
the lambda action by definition.
\end{proof}

\subsection{The generalized lambda-map}

We first introduce an important tool, called the generalized lambda map, which provides a useful method for detecting ideals in a skew brace. Let \(B\) be a skew brace, and let \(H\) be a characteristic subgroup of
\((B,+)\). The
lambda map of \(B\) induces a homomorphism
\[
\lambda_H:(B,\cdot)\longrightarrow \Aut((B,+)/H),
\qquad
b\longmapsto \bigl(a+H\longmapsto \lambda_b(a)+H\bigr).
\]
When \(H=0\), the map \(\lambda_H\) coincides with the usual lambda map
\(\lambda\). We shall call \(\lambda_H\) the generalized lambda map associated
with \(H\). The following lemma provides a useful criterion for proving that a characteristic
subgroup of the additive group is an ideal of \(B\).

\begin{Proposition}\label{lemmaKer}
Let \(B\) be a finite skew brace and let \(H\) be a characteristic
subgroup of \((B,+)\). Then \(H\leq\ker(\lambda_H)\) if and only if
\(H\) is an ideal of \(B\).
\end{Proposition}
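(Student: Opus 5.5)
We prove the two implications separately. Since \(H\) is characteristic in \((B,+)\), it is normal in \((B,+)\) and invariant under every additive automorphism. In particular \(\lambda_b(H)=H\) for all \(b\in B\), so \(H\) is automatically a left ideal. This is also what makes \(\lambda_H\) well defined. By Proposition~\ref{lem:left-ideal-criterion}, \(H\) is therefore an ideal if and only if \(H*B\leq H\). The proof thus reduces to showing that \(H*B\leq H\) is equivalent to \(H\leq\ker(\lambda_H)\).

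Next we unwind the kernel condition. For \(h\in H\), the statement \(h\in\ker(\lambda_H)\) means \(\lambda_h(a)+H=a+H\) for every \(a\in B\). Equivalently, \(\lambda_h(a)-a\in H\) for every \(a\in B\), since \(H\) is normal in \((B,+)\) and left and right cosets agree. By definition \(\lambda_h(a)-a=h*a\), so the condition reads \(h*a\in H\) for all \(a\in B\). Quantifying over \(h\in H\), we get \(H\leq\ker(\lambda_H)\) if and only if every generator \(h*a\) of \(H*B\) lies in \(H\). Because \(H\) is an additive subgroup, this is the same as \(H*B\leq H\).

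Combining this with the first step gives the equivalence. The only point that needs care, and it is not a real obstacle, is keeping track of the side of the cosets: the quotient \((B,+)/H\) is formed with respect to a normal subgroup, so \(x+H=y+H\) holds if and only if \(x-y\in H\). Finiteness of \(B\) enters only through Proposition~\ref{lem:left-ideal-criterion}, where the converse direction uses a cardinality argument, so it causes no difficulty here.
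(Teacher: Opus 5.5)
Your proof is correct and follows the paper's argument: both use characteristicity to make \(H\) a normal left ideal, then reduce via Proposition~\ref{lem:left-ideal-criterion} to the observation that \(h\in\ker(\lambda_H)\) is equivalent to \(h*b=\lambda_h(b)-b\in H\) for all \(b\in B\). Your explicit remark that normality of \(H\) lets you pass between the two coset conventions fills in a detail the paper leaves implicit.
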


\begin{proof}
Suppose first that $H \le \ker(\lambda_H)$. Let $h \in H$ and $b \in B$. Since $h \in \ker(\lambda_H)$, we have $\lambda_h(b)+H = b+H,$
which implies $\lambda_h(b)-b \in H$. Therefore $h * b = \lambda_h(b)-b \in H,$ and hence $H * B \subseteq H$. Thus, by Proposition \ref{lem:left-ideal-criterion}, $H \trianglelefteq B$. Conversely, assume that $H \trianglelefteq B$. Then, for all $h \in H$ and $b \in B$, we have $\lambda_h(b)-b = h * b \in H,$ which implies $\lambda_h(b)+H = b+H$. Hence $h \in \ker(\lambda_H)$, and therefore $H \le \ker(\lambda_H)$.
\end{proof}

As an application we have the following result:

\begin{Proposition}\label{IdealCoprimeAut}
Let \(B\) be a finite skew brace and let \(H\) be a characteristic subgroup of
\((B,+)\). Suppose that $\gcd\bigl(|H|,|\operatorname{Aut}((B,+)/H)|\bigr)=1.$
Then \(H\) is an ideal of \(B\).
\end{Proposition}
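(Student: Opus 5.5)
The plan is to reduce the statement to Proposition~\ref{lemmaKer}: it suffices to show that \(H\leq\ker(\lambda_H)\). Since \(H\) is characteristic in \((B,+)\), every automorphism \(\lambda_b\) of \((B,+)\) maps \(H\) onto itself. Hence \(H\) is a left ideal, and the generalized lambda map \(\lambda_H\colon(B,\cdot)\to\Aut((B,+)/H)\) is a well-defined group homomorphism. By Proposition~\ref{lem:characteristic-left-ideal}, \(H\) is a subgroup of \((B,\cdot)\). So we may restrict \(\lambda_H\) to obtain a homomorphism
\[
\lambda_H|_H\colon (H,\cdot)\longrightarrow \Aut((B,+)/H).
\]

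The key step is an order argument. The image \(\lambda_H(H)\) is isomorphic to a quotient of \((H,\cdot)\), so its order divides \(|H|\). It is also a subgroup of \(\Aut((B,+)/H)\), so by Lagrange its order divides \(|\Aut((B,+)/H)|\). By hypothesis these two numbers are coprime, so \(|\lambda_H(H)|=1\). That is, \(\lambda_H(h)\) is the identity for every \(h\in H\), which means \(H\leq\ker(\lambda_H)\). Proposition~\ref{lemmaKer} then gives that \(H\) is an ideal of \(B\).

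I expect no real obstacle here. The only points to check are that the restriction to \(H\) makes sense, which relies on \(H\) being a multiplicative subgroup (Proposition~\ref{lem:characteristic-left-ideal}), and that \(\lambda_H\) is a homomorphism. The latter holds because \(\lambda\) is a homomorphism and each \(\lambda_b\) preserves the characteristic subgroup \(H\), so composition of the induced automorphisms on \((B,+)/H\) is compatible with multiplication in \((B,\cdot)\).
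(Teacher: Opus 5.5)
Your proposal is correct and follows the paper's proof: restrict $\lambda_H$ to $(H,\cdot)$, note that $|\lambda_H(H)|$ divides both $|H|$ and $|\operatorname{Aut}((B,+)/H)|$, conclude $\lambda_H(H)=1$, and apply Proposition~\ref{lemmaKer}. You are slightly more explicit than the paper in citing Proposition~\ref{lem:characteristic-left-ideal} to justify that $H$ is a multiplicative subgroup; the paper uses this fact without citing it.
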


\begin{proof}
Consider the generalized lambda map $\lambda_H:(B,\cdot)\longrightarrow \operatorname{Aut}((B,+)/H).$ We claim that $H\leq \ker(\lambda_H).$ Indeed, the image \(\lambda_H(H)\) has order dividing \(|H|\), since
\((H,\cdot)\) is a subgroup of \((B,\cdot)\) of order \(|H|\). On the other
hand, \(\lambda_H(H)\) is a subgroup of $\operatorname{Aut}((B,+)/H).$
Therefore
\[
|\lambda_H(H)|
\mid
\gcd\bigl(|H|,|\operatorname{Aut}((B,+)/H)|\bigr)=1.
\]
Hence $\lambda_H(H)=1$ and we obtain $H\leq \ker(\lambda_H).$ By Proposition \ref{lemmaKer} we deduce that $H$ is an ideal of $B$.
\end{proof}

\subsection{$Z$-groups and the holomorph}

Recall that a finite group \(G\) is called a \(Z\)-group if all its Sylow subgroups are cyclic. The next theorem gives a full description of such groups:

\begin{Theorem}[{\cite[Theorem~9.4.3]{Hall1959}}] \label{Zgroups}
Let \(G\) be a finite \(Z\)-group. Then \(G\) admits a presentation of the form
\[
G=\langle a,b \mid a^{m}=b^{n}=1,\; bab^{-1}=a^{r}\rangle,
\]
where
\[
(m,n)=(m,r-1)=1
\qquad\text{and}\qquad
r^{n}\equiv 1 \pmod{m}.
\]
\end{Theorem}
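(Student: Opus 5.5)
The statement is the classical Hölder--Burnside--Zassenhaus structure theorem for \(Z\)-groups, and I would prove it along the standard lines, by induction on \(|G|\) and Burnside's normal \(p\)-complement theorem. The first step is to show that \(G\) is soluble with cyclic derived subgroup. Let \(p\) be the smallest prime divisor of \(|G|\) and \(P\in\Syl_p(G)\), which is cyclic. Then \(N_G(P)/C_G(P)\) embeds in \(\Aut(P)\), whose order is \(p^{k-1}(p-1)\). On the other hand, \(|N_G(P)/C_G(P)|\) is prime to \(p\), since \(P\le C_G(P)\) because \(P\) is abelian, and every prime divisor of it is at least \(p\). Hence \(N_G(P)=C_G(P)\), and Burnside's transfer theorem gives a normal \(p\)-complement \(K\). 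Since \(K\) is again a \(Z\)-group, induction shows that \(G\) is soluble, and in fact that \(G\) has a Sylow tower.

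Next I would show that \(G'\) is cyclic and of order coprime to \([G:G']\). Every Sylow subgroup of \(G/G'\) is a quotient of a cyclic group, so \(G/G'\) is cyclic. For \(G'\), I would argue by induction that \(G'\) is nilpotent. Since \(G\) is soluble, the Fitting subgroup \(F=F(G)\) satisfies \(C_G(F)\le F\). As \(F\) is nilpotent with cyclic Sylow subgroups, \(F\) is cyclic. Thus \(G/C_G(F)\) embeds in the abelian group \(\Aut(F)\), so \(G'\le C_G(F)\le F\). Hence \(G'\) is cyclic, being a subgroup of \(F\).

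Coprimality is the main obstacle. Suppose a prime \(q\) divides both \(|G'|\) and \([G:G']\), and let \(Q\in\Syl_q(G)\), which is cyclic. Then \(Q\cap G'\) is a nontrivial proper subgroup of \(Q\), normal in \(G\) because it is characteristic in the cyclic normal subgroup \(G'\). I would use the coprime action of \(G\) on the \(q\)-part \(G'_q\). The quotient \(G/C_G(G'_q)\) embeds in \(\Aut(G'_q)\), which has order \(q^{a-1}(q-1)\). Since \(Q\) is abelian and contains \(G'_q\), we have \(Q\le C_G(G'_q)\), so only \(q'\)-elements act nontrivially. The \(q\)-part of \(G'\) is generated by commutators, and by coprime action one gets \([G'_q,G]=[G'_q,X]\), where \(X\) is a Hall \(q'\)-subgroup. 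I would then apply the standard focal subgroup argument: Burnside applied to the abelian Sylow subgroup \(Q\) gives \(Q\cap G'=Q\cap N_G(Q)'=[Q,N_G(Q)]\). Coprime action on the cyclic group \(Q\) gives \(Q=C_Q(N)\times[Q,N]\) with \(N=N_G(Q)\). Since \(Q\) is cyclic and therefore indecomposable, one of these factors is trivial. Hence \(Q\cap G'\) is either \(1\) or \(Q\), which contradicts \(Q\cap G'\) being a nontrivial proper subgroup of \(Q\).

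Finally, the presentation. Write \(G'=\langle a\rangle\) of order \(m\). By Schur--Zassenhaus, which here is elementary since \(G'\) is cyclic and coprime to its index, \(G'\) has a complement, and this complement is cyclic since it is isomorphic to \(G/G'\). Write it as \(\langle b\rangle\) of order \(n\), with \((m,n)=1\). Then \(bab^{-1}=a^r\) for some \(r\) coprime to \(m\), and \(b^n=1\) forces \(r^n\equiv1\pmod m\). The condition \((m,r-1)=1\) follows from the equality \(G'=[\langle a\rangle,\langle b\rangle]=\langle a^{r-1}\rangle\). This holds because \(G'\) is generated by the commutators of the generators together with their conjugates, and \(\langle a^{r-1}\rangle\) is normal. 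Since \(\langle a^{r-1}\rangle=\langle a\rangle\), the element \(a^{r-1}\) generates a group of order \(m\), so \((m,r-1)=1\). Conversely, these relations define a group of order \(mn\), so the presentation is exactly \(G\).
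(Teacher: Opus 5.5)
The paper does not prove this statement. It is quoted from Hall's book and used only as a black box, in Lemma~\ref{ZGroupsSupersoluble}, to know that $G'$ and $G/G'$ are cyclic. Your proposal is a correct, self-contained proof. Its skeleton is the classical one: Burnside's normal $p$-complement theorem gives solubility, $C_G(F(G))\le F(G)$ with $F(G)$ cyclic makes $G'$ cyclic, and $G'=\langle a^{r-1}\rangle$ gives $(m,r-1)=1$.

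You depart from the usual textbook argument in how you obtain $(m,n)=1$ and the splitting.
\begin{itemize}
\item \emph{Your route.} You get coprimality first, from transfer theory for an abelian Sylow subgroup $Q$: the focal subgroup theorem plus Burnside's fusion lemma give $Q\cap G'=[Q,N_G(Q)]$. You combine this with the coprime-action decomposition $Q=C_Q(N_G(Q))\times[Q,N_G(Q)]$ and the indecomposability of $Q$. Only then do you split via Schur--Zassenhaus.
\item \emph{The standard route.} It needs none of this. Write $G'=\langle a\rangle$, $G=\langle a\rangle\langle b\rangle$, $b^n=a^l$ and $bab^{-1}=a^r$, and get $(m,r-1)=1$ exactly as you do. Since $b$ commutes with $b^n=a^l$, we have $a^l=a^{lr}$, so $m\mid l(r-1)$, hence $m\mid l$ and $b^n=1$. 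The extension therefore splits for free. Finally, if a prime $q$ divided both $m$ and $n$, the Sylow $q$-subgroup $\langle a_q\rangle\langle b_q\rangle$ would be cyclic. (It is a subgroup because $\langle a_q\rangle\trianglelefteq G$.) Yet it would contain two nontrivial subgroups meeting trivially, which is impossible.
\end{itemize}
So the textbook proof uses transfer only once, for solubility. Yours uses it a second time and adds Schur--Zassenhaus. In exchange, yours shows directly, Sylow by Sylow, why each Sylow subgroup either lies in $G'$ or meets it trivially.

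Two small points of presentation:
\begin{itemize}
\item The remarks about $G$ acting on $G'_q$ and $[G'_q,G]=[G'_q,X]$ are never used, and that action is not coprime anyway. You can delete them.
\item The decomposition $Q=C_Q(N)\times[Q,N]$ with $N=N_G(Q)$ needs one more sentence. $N$ acts on $Q$ through $N/C_N(Q)$, which is a $q'$-group because $Q\le C_N(Q)$ (as $Q$ is abelian) and $Q\in\Syl_q(N)$.
\end{itemize}
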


Let \(H\) be a group. The \emph{holomorph} of \(H\) is the semidirect
product $\operatorname{Hol}(H)=H\rtimes\operatorname{Aut}(H),$
where \(\operatorname{Aut}(H)\) acts on \(H\) by evaluation.The holomorph acts on \(H\) by
\[
(h,\alpha)\cdot x\longmapsto h+\alpha(x).
\]
A subgroup \(G\leq\operatorname{Hol}(H)\) is called \emph{regular} if
this action is regular, that is, if for every \(x,y\in H\) there exists
a unique \(g\in G\) such that \(g \cdot x=y\). 
The following result describes the groups of \(2\)-power order whose
holomorph contains an element of maximal possible order.

\begin{Theorem}[{\cite[Theorem~6.1]{Byott2007}}]
\label{thm:Byott-cyclic-element}
Let \(H\) be a group of order \(2^n\), where \(n\geq3\). If
\(\operatorname{Hol}(H)\) contains an element of order \(2^n\), then
\(H\) is cyclic, dihedral, or generalized quaternion. More precisely,
\[
H\simeq C_{2^n},\qquad
H\simeq D_{2^n},
\qquad\text{or}\qquad
H\simeq Q_{2^n}.
\]
\end{Theorem}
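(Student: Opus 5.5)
The statement is \cite[Theorem~6.1]{Byott2007}, so the paper will only quote it. If I had to prove it, I would start from an element \(g=(h,\alpha)\in\operatorname{Hol}(H)\) of order \(2^n\). Writing \(H\) additively, induction gives
\[
g^k=\bigl(h+\alpha(h)+\dots+\alpha^{k-1}(h),\,\alpha^k\bigr).
\]
Let \(2^m\) be the order of \(\alpha\); it is a power of \(2\) because the image of \(g\) in \(\operatorname{Aut}(H)\) has \(2\)-power order. Then \(g^{2^m}=(N(h),\mathrm{id})\) is a translation, where \(N(h)=\sum_{i<2^m}\alpha^i(h)\). Hence
\[
2^n=|g|=2^m\cdot|N(h)|\le 2^m\exp(H).
\]
So \(H\) must have an automorphism of \(2\)-power order at least \(|H|/\exp(H)\), and \(\alpha\) must fix the element \(N(h)\), which has order \(2^{n-m}\).

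\textbf{Base case \(n=3\).} Here the statement is a finite check over the five groups of order \(8\).
\begin{itemize}
\item For \(H=C_2^3\), we have \(\operatorname{Hol}(H)\cong\mathrm{AGL}(3,2)\). This embeds in the \(4\times4\) matrices over \(\mathbb{F}_2\), whose unipotent elements have order at most \(4\).
\item For \(H=C_4\times C_2\), \(|\operatorname{Aut}(H)|=8\) and every automorphism has order at most \(2\), while \(\exp(H)=4\). So \(|g|=8\) would need an automorphism \(\alpha\) of order \(2\) fixing a suitable \(N(h)\) of order \(4\). A direct computation shows \(N(h)=h+\alpha(h)\) then has order at most \(2\).
\end{itemize}
Only \(C_8\), \(D_8\) and \(Q_8\) survive.

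\textbf{Inductive step.} Fix a characteristic subgroup \(K\) of \(H\) of order \(2\). A natural candidate is \(\Omega_1(Z(H))\) when it has order \(2\); otherwise I would take a characteristic subgroup of \(\Omega_1(Z(H))\), or reduce to that situation. Since \(K\) is characteristic, there is a natural homomorphism \(\operatorname{Hol}(H)\to\operatorname{Hol}(H/K)\). Let \(\bar g\) be the image of \(g\).
\begin{enumerate}
\item By orbit counting, or by the same norm estimate, \(\bar g\) has order at most \(|H/K|=2^{n-1}\).
\item Then \(g^{2^{n-1}}\) lies in the kernel. I would show that this kernel element has order at most \(|K|=2\), using the explicit form of \(g^k\).
\item Since \(|g|=2^n\), both bounds must be equalities, so \(\operatorname{Hol}(H/K)\) contains an element of order \(2^{n-1}\).
\item By induction, \(H/K\) is cyclic, dihedral or generalized quaternion (for \(n-1=2\), one checks \(C_2\times C_2\) separately). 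In particular \(H\) has a cyclic subgroup of index at most \(4\) and is close to maximal class.
\end{enumerate}

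\textbf{Main obstacle.} The hard part is the final identification. The \(2\)-groups with a central subgroup \(K\) of order \(2\) and \(H/K\) cyclic, dihedral or quaternion are \(C_{2^n}\), \(C_{2^{n-1}}\times C_2\), the modular group \(M_{2^n}\), \(D_{2^n}\), \(Q_{2^n}\) and \(SD_{2^n}\), together with a few small extensions of dihedral quotients. Each group outside \(C_{2^n}\), \(D_{2^n}\), \(Q_{2^n}\) must be excluded by showing that no \(\alpha\) of order \(2^m\) has an \(N(h)\) of order \(2^{n-m}\). I would try first to use \(\exp(H)=2^{n-1}\) to force \(m\ge1\). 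Then I would exploit that in these groups \(\operatorname{Aut}(H)\) is a \(2\)-group of small exponent whose action on the cyclic subgroup \(\langle x\rangle\) of index \(2\) is by \(x\mapsto x^{u}\) with controlled \(u\). The aim is a direct norm computation showing \(|N(h)|\,2^m<2^n\); I expect this computation to be the real work.
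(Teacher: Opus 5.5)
The paper gives no proof of this statement; it imports it from Byott, so your first sentence is exactly what the paper does. The sketch you add is not a proof, however, and the gap is where you place the ``main obstacle''. After your reduction, all of Byott's content still remains. You would have to exclude every group of order $2^n$, other than $C_{2^n}$, $D_{2^n}$, $Q_{2^n}$, having a central $K$ of order $2$ with $H/K$ cyclic, dihedral or generalized quaternion. You do not carry out any of these exclusions, and your list of candidates is wrong:
\begin{itemize}
\item $M_{2^n}$ does not occur. Its only central subgroup of order $2$ is $M_{2^n}'$, and the quotient is the abelian group $C_{2^{n-2}}\times C_2$.
\item For \emph{every} $n\geq4$ you omit groups such as $D_{2^{n-1}}\times C_2$, $Q_{2^{n-1}}\times C_2$ and $C_{2^{n-2}}\rtimes C_4$ (generator of $C_4$ inverting). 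You also omit further non-split central extensions; note that $|H^2(D_{2^{n-1}},\mathbb{F}_2)|=8$.
\end{itemize}
None of these omitted groups has a cyclic subgroup of index $2$. So your planned method, which tracks automorphisms through $x\mapsto x^u$ on a cyclic subgroup of index $2$, does not even apply to them. As it stands, the argument stops at ``$H/K$ is cyclic, dihedral or generalized quaternion'', i.e.\ $H$ has a cyclic subgroup of index at most $4$.

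Two earlier steps also need repair.

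\textbf{Base case.} $\operatorname{Aut}(C_4\times C_2)\simeq D_8$ contains automorphisms of order $4$, for instance $a\mapsto a+b$, $b\mapsto 2a+b$ with $|a|=4$, $|b|=2$. So you must also treat $m=2$, where $|N(h)|=2$ would be needed. This is easy: every such $\alpha$ squares to the central automorphism $x\mapsto -x$, so $N(h)=h+\alpha(h)-h-\alpha(h)=0$.

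\textbf{Inductive step.} A characteristic subgroup of order $2$ need not exist (take $H=C_2^n$ or $C_4\times C_4$), and ``reduce to that situation'' is not an argument. You only need a central $K=\langle z\rangle$ with $\alpha(z)=z$. Such a $z$ exists because the $2$-element $\alpha$ fixes a nonzero vector of the $\mathbb{F}_2$-space $\Omega_1(Z(H))$. Then $g$ lies in $H\rtimes\operatorname{Stab}_{\operatorname{Aut}(H)}(K)$, which maps homomorphically into $\operatorname{Hol}(H/K)$ via $(x,\beta)\mapsto(x+K,\bar\beta)$. The kernel has exponent at most $2$. Indeed, any $\beta$ inducing the identity on $H/K$ has the form $\beta(x)=x+f(x)$, where $f\colon H\to K$ is a homomorphism vanishing on $K$, so $\beta^2=\mathrm{id}$.

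\textbf{Minor point.} For non-abelian $H$, commuting $g$ with $g^{2^m}$ gives $\alpha(N(h))=-h+N(h)+h$, not $\alpha(N(h))=N(h)$.
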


The odd-order case was treated by Kohl in \cite[Theorem~4.5]{Kohl1998}.

\begin{Theorem}[{\cite[Theorem~4.5]{Kohl1998}}]
\label{thm:Kohl-cyclic-element}
Let \(p\) be an odd prime and let \(H\) be a group of order \(p^n\).
If \(\operatorname{Hol}(H)\) contains an element of order \(p^n\),
then \(H\) is cyclic.
\end{Theorem}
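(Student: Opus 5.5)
The proof would go by induction on \(n\). The aim is to show that if \(H\) is non-cyclic, then every \(p\)-element of \(\operatorname{Hol}(H)=H\rtimes\operatorname{Aut}(H)\) has order at most \(p^{n-1}\). For \(n\le 1\) there is nothing to prove. For \(n=2\) with \(H\simeq C_p\times C_p\), we have \(\operatorname{Hol}(H)=\operatorname{AGL}_2(p)\le\operatorname{GL}_3(p)\). A \(p\)-element there is unipotent of size at most \(3\le p\) (here we use that \(p\) is odd), so it has order at most \(p\). This base computation already shows why \(p\) odd is needed: for \(p=2\), \(\operatorname{AGL}_2(2)\simeq S_4\) contains elements of order \(4\).

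Fix \(g=(h,\alpha)\in\operatorname{Hol}(H)\) of order \(p^n\). Its powers are
\[
g^k=\bigl(h+\alpha(h)+\dots+\alpha^{k-1}(h),\ \alpha^k\bigr).
\]
The subgroup \(\langle g\rangle\) acts on \(H\) by \(x\mapsto h+\alpha(x)\). Let \(p^k\) be the size of the orbit of \(0\). Then \(g^{p^k}\) lies in \(\operatorname{Aut}(H)\) and has order \(p^{n-k}\). The plan splits into two cases according to whether this orbit is all of \(H\).

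\emph{Regular case, \(k=n\).} Here \(\langle g\rangle\) is a cyclic regular subgroup of \(\operatorname{Hol}(H)\). By the standard correspondence it gives a skew brace with additive group \(H\) and cyclic multiplicative group of order \(p^n\). I would then use induction through a characteristic subgroup. Let \(N=\Omega_1(Z(H))\), or better a characteristic subgroup of order \(p\) contained in it. This \(N\) is normalized by \(\langle g\rangle\) in the sense that \(\operatorname{Hol}(H)\to\operatorname{Hol}(H/N)\) is well defined. The image of \(g\) in \(\operatorname{Hol}(H/N)\) acts transitively on \(H/N\), so its order is at least \(p^{n-1}\). Hence by induction \(H/N\) is cyclic. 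A group \(H\) with a central subgroup \(N\) of order \(p\) and \(H/N\) cyclic is abelian of rank at most \(2\), that is \(H\simeq C_{p^{n-1}}\times C_p\) if non-cyclic. For this \(H\) I would rule out an element of order \(p^n\) in the holomorph by a direct computation. The key ingredients are that \(\operatorname{Aut}(H)\) acts on \(H/\Phi(H)\simeq C_p^2\) through a Borel-type subgroup, and the binomial identity \((1+x)^p\equiv 1+px \pmod{p^2x}\), valid for \(p\) odd.

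\emph{Non-regular case, \(k<n\).} Then \(\alpha\) itself has order at least \(p^{n-k}\), and the order of \(g\) is at most \(p^k\cdot|\alpha^{p^k}|\). I would invoke Horo\v{s}evski\u{\i}'s bound that an automorphism of a non-cyclic \(p\)-group of order \(p^n\) has order less than \(p^{n-1}\). Combining this with the orbit bound, and again with induction on \(H/N\) applied to the image of \(g\), should force the order of \(g\) to be at most \(p^{n-1}\). One also has to handle the case where the image of \(g\) in \(\operatorname{Hol}(H/N)\) loses a factor \(p\), which happens when \(g^{p^{n-1}}\) acts trivially on \(H/N\); this needs an analysis of the kernel \(N\rtimes C_{\operatorname{Aut}(H)}(H/N)\).

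I expect the main obstacle to be exactly this control of the kernel of \(\operatorname{Hol}(H)\to\operatorname{Hol}(H/N)\), together with the non-regular case. Plain induction loses a factor of \(p\) precisely there. I would first try to show that the kernel has exponent \(p\) when \(p\) is odd, using that it consists of pairs \((m,\beta)\) with \(m\in N\) and \(\beta\) inducing the identity on \(H/N\), where \(N\) has order \(p\). If that fails, the fallback is to cite Kohl's argument directly.
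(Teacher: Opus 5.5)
The paper gives no proof of this statement; it simply quotes Kohl. Judged on its own, your outline has a genuine gap, which you partly flag yourself: the non-regular case is never closed.

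The bound you attribute to Horo\v{s}evski\u{\i} is false as stated. In $C_p\times C_p$ a transvection has order $p=p^{n-1}$, and a Singer cycle has order $p^2-1$. The proposed way of combining such a bound with the orbit bound and with $\operatorname{Hol}(H)\to\operatorname{Hol}(H/N)$ is also left open.

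The missing observation is that this case is empty. Since $g^{p^k}=(0,\alpha^{p^k})$ has order $p^{n-k}$, you get $\alpha^{p^{n-1}}=(\alpha^{p^k})^{p^{n-1-k}}\neq 1$. Hence $|\alpha|=p^n$ exactly, not merely at least $p^{n-k}$. On the other hand, $\langle\alpha\rangle$ fixes $0$, so its orbits on the other $p^n-1$ elements have $p$-power length at most $p^{n-1}$. In a cyclic group the stabilizer of a point in an orbit of length $p^j$ is $\langle\alpha^{p^j}\rangle$, so $\alpha^{p^{n-1}}=1$. Thus every element of order $p^n$ in $\operatorname{Hol}(H)$ acts regularly, and the kernel analysis you worry about is never needed.

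The regular case needs two repairs.

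First, $\Omega_1(Z(H))$ need not contain a characteristic subgroup of order $p$; for example, $H=C_{p^2}\times C_{p^2}$ has none. Take instead $N\le\Omega_1(Z(H))$ of order $p$ with $\alpha(N)=N$. Such an $N$ exists because the $p$-group $\langle\alpha\rangle$ fixes a non-zero vector of $\Omega_1(Z(H))$. Then $g$ induces $(h+N,\bar\alpha)\in\operatorname{Hol}(H/N)$, which is transitive on $H/N$. Your argument then correctly gives $H\simeq C_{p^{n-1}}\times C_p$ when $H$ is not cyclic.

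Second, the direct computation you defer for this group is exactly where the remaining content lies, and it is not carried out. It is, however, unnecessary. For $n\geq 3$, $N_0=p^{n-2}H$ is characteristic of order $p$, and $H/N_0\simeq C_{p^{n-2}}\times C_p$ is not cyclic. The regular element $g$ induces a transitive element of $\operatorname{Hol}(H/N_0)$, whose order is therefore at least $p^{n-1}$. Some power of it then has order exactly $p^{n-1}$, contradicting the induction hypothesis.

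With these changes your induction becomes a complete proof. The oddness of $p$ enters only through the base case $C_p\times C_p$, which you handled correctly.
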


The connection between skew braces and regular subgroups of holomorphs
is given by the following fundamental result.

\begin{Theorem}[{\cite[Theorem~4.2]{Guarnieri_2016}}]
\label{Connection}
Let \(H\) be a group and let \(G\) be a regular subgroup of
\(\operatorname{Hol}(H)\). Then there exists a skew brace
\((B,+,\cdot)\) such that $(B,+)\simeq H$ and $(B,\cdot)\simeq G.$ 
Conversely, if \((B,+,\cdot)\) is a skew brace, then \((B,\cdot)\) can
be identified with a regular subgroup of
\(\operatorname{Hol}(B,+)\) via the injective homomorphism $(B,\cdot)\longrightarrow\operatorname{Hol}(B,+), $
$b\longmapsto (b,\lambda_b).$
\end{Theorem}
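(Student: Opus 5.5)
The plan is to prove both directions by transporting structure along the orbit map at the identity. The only inputs are the definition of the action of \(\operatorname{Hol}(H)\) on \(H\) and the basic properties of the lambda map recalled at the start of this section. Throughout, I write \(H\) additively with identity \(0\).

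For the first direction, let \(G\leq\operatorname{Hol}(H)\) be regular. By regularity, the map \(\pi:G\to H\), \(g\mapsto g\cdot 0\), is a bijection. Since \((h,\alpha)\cdot 0=h+\alpha(0)=h\), the unique element of \(G\) sending \(0\) to \(x\) has the form \(g_x=(x,\alpha_x)\) for a uniquely determined \(\alpha_x\in\operatorname{Aut}(H)\). I will take \(B=H\) as a set, let \(+\) be the operation of \(H\), and define \(x\cdot y=\pi(g_xg_y)\).

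\begin{itemize}
\item Because \(g_xg_y\) is the element of \(G\) sending \(0\) to \(g_x\cdot(g_y\cdot 0)\), we get
\[
x\cdot y=g_x\cdot(g_y\cdot 0)=g_x\cdot y=x+\alpha_x(y).
\]
\item The map \(x\mapsto g_x\) is a bijection \(B\to G\) that, by the definition of \(\cdot\), carries \(\cdot\) to the multiplication of \(G\). Hence \((B,\cdot)\) is a group isomorphic to \(G\), and \((B,+)=H\) by construction.
\item It remains to check the skew brace identity, which follows because \(\alpha_x\) is additive:
\[
x\cdot(y+z)=x+\alpha_x(y)+\alpha_x(z)=(x+\alpha_x(y))-x+(x+\alpha_x(z))=x\cdot y-x+x\cdot z .
\]
\end{itemize}

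For the converse, let \(B\) be a skew brace and define \(\iota(b)=(b,\lambda_b)\in\operatorname{Hol}(B,+)\). I will use three facts already recalled: \(\lambda_b\in\operatorname{Aut}(B,+)\), \(\lambda\) is a homomorphism, and \(a\cdot b=a+\lambda_a(b)\).

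\begin{itemize}
\item \(\iota\) is a homomorphism: in the holomorph,
\[
(a,\lambda_a)(b,\lambda_b)=(a+\lambda_a(b),\lambda_a\lambda_b)=(a\cdot b,\lambda_{a\cdot b}).
\]
\item \(\iota\) is injective, since the first coordinate of \(\iota(b)\) is \(b\).
\item The image is regular. The element \(\iota(b)\) acts on \(B\) by \(x\mapsto b+\lambda_b(x)=b\cdot x\), so \(\iota(B)\) acts on \(B\) by left multiplication in the group \((B,\cdot)\). This action is regular.
\end{itemize}

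I do not expect a genuine obstacle here. The only point needing care is identifying \(g_x\) as \((x,\alpha_x)\), that is, showing the translation component of \(g_x\) equals \(\pi(g_x)\). This holds because every automorphism fixes \(0\); once that is in place, the rest is direct verification.
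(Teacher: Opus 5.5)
Your proof is correct. Transporting the structure along the orbit map \(g\mapsto g\cdot 0\) (using \((h,\alpha)\cdot 0=h\)) gives the first direction, and checking that \(b\mapsto(b,\lambda_b)\) is an injective homomorphism whose image acts by left multiplication in \((B,\cdot)\) gives the converse. The paper does not prove this statement but only cites it from Guarnieri--Vendramin, and your argument is essentially the standard one given there.
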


Let \(B\) be a finite skew brace and let \(p\) be a prime divisor of
\(|B|\). A subbrace \(P\) of \(B\) is called a \emph{Sylow
\(p\)-subbrace} if $(P,+)\in\operatorname{Syl}_p((B,+)).$ and $(P,\cdot)\in\operatorname{Syl}_p((B,\cdot)).$
Equivalently, \(P\) is a subbrace of \(B\) whose order is equal to the
largest power of \(p\) dividing \(|B|\). Such Sylow $p$-subbraces always exist by \cite[Theorem~2.1]{TrumanSylow}.

The next lemma shows that the assumption that the multiplicative group of a skew brace is a $Z$-group imposes strong restrictions on the structure of its additive group. 
Throughout the paper, \(D_{2^a}\) and \(Q_{2^a}\) denote, respectively, the dihedral group and the generalized quaternion group of order \(2^a\).

\begin{Proposition}\label{lem:additive-almost-Sylow-cyclic}
Let \(B\) be a finite skew brace such that \((B,\cdot)\) is a
\(Z\)-group. Then every Sylow subgroup of odd order of \((B,+)\) is
cyclic. Moreover, a Sylow \(2\)-subgroup of \((B,+)\) is cyclic,
dihedral, generalized quaternion, or isomorphic to
\(C_2\times C_2\).
\end{Proposition}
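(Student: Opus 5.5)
We reduce the statement to the results of Kohl and Byott by passing to a Sylow subbrace. Fix a prime \(q\) dividing \(|B|\) and write \(q^n\) for the largest power of \(q\) dividing \(|B|\). By \cite[Theorem~2.1]{TrumanSylow} there is a Sylow \(q\)-subbrace \(P\) of \(B\). Then \((P,+)\) is a Sylow \(q\)-subgroup of \((B,+)\) and \((P,\cdot)\) is a Sylow \(q\)-subgroup of \((B,\cdot)\). Since \((B,\cdot)\) is a \(Z\)-group, \((P,\cdot)\) is cyclic of order \(q^n\). All Sylow \(q\)-subgroups of \((B,+)\) are conjugate, hence isomorphic, so it suffices to identify \((P,+)\).

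Next we apply Theorem~\ref{Connection} to the skew brace \(P\). The map \(b\mapsto(b,\lambda_b)\) embeds \((P,\cdot)\) as a regular subgroup of \(\operatorname{Hol}(P,+)\). In particular, \(\operatorname{Hol}(P,+)\) contains a cyclic subgroup of order \(q^n=|(P,+)|\), and hence an element of order \(q^n\).

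If \(q\) is odd, Theorem~\ref{thm:Kohl-cyclic-element} gives directly that \((P,+)\) is cyclic. This proves the first assertion.

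If \(q=2\), we split according to the exponent \(n\).
\begin{itemize}
\item If \(n\geq3\), Theorem~\ref{thm:Byott-cyclic-element} shows that \((P,+)\) is cyclic, dihedral or generalized quaternion.
\item If \(n\leq2\), then \((P,+)\) has order at most \(4\). It is therefore cyclic or isomorphic to \(C_2\times C_2\). Here \(C_2\times C_2\) is listed separately because it coincides with the degenerate dihedral group of order \(4\).
\end{itemize}

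There is no substantial obstacle in this argument. The only point needing care is to pass to a Sylow subbrace, so that a single subset carries both Sylow subgroups, rather than working with the Sylow subgroups of the two groups of \(B\) separately. After that, the holomorph of the additive Sylow subgroup visibly contains an element of maximal order, and the small cases \(n\leq 2\) are settled by inspection.
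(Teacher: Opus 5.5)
Your proposal is correct and matches the paper's proof: both pass to a Sylow subbrace via Truman's theorem, embed the cyclic multiplicative Sylow subgroup as a regular subgroup of \(\operatorname{Hol}(P,+)\) to obtain an element of order \(|P|\), and then apply Kohl's theorem for odd primes and Byott's theorem for \(2\)-groups of order at least \(8\). The case \(|P|\leq 4\) is settled by inspection in both.
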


\begin{proof}
Let \(q\) be an odd prime dividing \(|B|\). By
\cite[Theorem~2.1]{TrumanSylow}, the skew brace \(B\) has a Sylow
\(q\)-subbrace \(Q\). Hence $(Q,\cdot)\in\operatorname{Syl}_q(B,\cdot)$ and $(Q,+)\in\operatorname{Syl}_q(B,+).$
Since \((B,\cdot)\) is a \(Z\)-group, the group \((Q,\cdot)\) is
cyclic. By Theorem~\ref{Connection}, \((Q,\cdot)\) can be identified
with a regular subgroup of \(\operatorname{Hol}(Q,+)\). In particular,
\(\operatorname{Hol}(Q,+)\) contains an element of order \(|Q|\).
Since \(q\) is odd, Theorem~\ref{thm:Kohl-cyclic-element} implies that
\((Q,+)\) is cyclic. Let now \(P\) be a Sylow \(2\)-subbrace of \(B\). Then $(P,\cdot)\in\operatorname{Syl}_2(B,\cdot),$
and hence \((P,\cdot)\) is cyclic. By Theorem~\ref{Connection}, the
group \((P,\cdot)\) can be identified with a regular subgroup of
\(\operatorname{Hol}(P,+)\). Therefore
\(\operatorname{Hol}(P,+)\) contains an element of order \(|P|\). If \(|P|\leq4\), then \((P,+)\) is cyclic or isomorphic to
\(C_2\times C_2\). Suppose that \(|P|=2^a\) with \(a\geq3\). By
Theorem~\ref{thm:Byott-cyclic-element}, we have $(P,+)\simeq C_{2^a}$, $(P,+)\simeq D_{2^a}$ or $(P,+)\simeq Q_{2^a}$
Since \((P,+)\in\operatorname{Syl}_2(B,+)\), the conclusion follows.
\end{proof}

\subsection{Supersoluble groups}

Recall that a finite group \(G\) is called \emph{supersoluble} if it admits a normal series $ 1=G_0\trianglelefteq G_1\trianglelefteq\cdots \trianglelefteq G_n=G$ such that every factor \(G_i/G_{i-1}\) is cyclic of prime order. The following lemma is well known. Nevertheless, for the sake of clarity and
to keep the exposition self-contained, we include a short proof.

\begin{Lemma}\label{ZGroupsSupersoluble}
Every finite \(Z\)-group is supersoluble.
\end{Lemma}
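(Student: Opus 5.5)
The plan is to use the presentation of Theorem~\ref{Zgroups} and build an explicit normal series with cyclic factors of prime order. Write
\[
G=\langle a,b \mid a^{m}=b^{n}=1,\; bab^{-1}=a^{r}\rangle
\]
with \((m,n)=(m,r-1)=1\) and \(r^n\equiv1\pmod m\), and put \(A=\langle a\rangle\).

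The first step is to check that \(A\) is normal in \(G\) with \(G/A\) cyclic. Since \(bab^{-1}=a^r\) and \(b\) has order \(n\) with \(r^n\equiv 1\pmod m\), conjugation by \(b\) preserves \(A\), so \(A\trianglelefteq G\). The quotient \(G/A\) is generated by the image of \(b\), hence is cyclic of order dividing \(n\).

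The second step refines the series inside \(A\). Because \(A\) is cyclic, each of its subgroups is characteristic in \(A\), and therefore normal in \(G\). Choose a chain of subgroups
\[
1=A_0<A_1<\cdots<A_k=A
\]
in which every index \(|A_i:A_{i-1}|\) is prime; this is possible by factoring \(m\) into primes. Each \(A_i\) is normal in \(G\) and each factor is cyclic of prime order.

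The third step refines the series above \(A\). The quotient \(G/A\) is cyclic, hence abelian, so every subgroup of \(G/A\) is normal in \(G/A\). Take a chain of subgroups of \(G/A\) with prime indices and pull it back to a chain of subgroups of \(G\) lying between \(A\) and \(G\). Each subgroup in this chain is normal in \(G\), being the preimage of a normal subgroup of \(G/A\). Each successive factor is cyclic of prime order, since it is isomorphic to the corresponding factor in \(G/A\).

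Putting the two chains together gives a normal series of \(G\) all of whose factors are cyclic of prime order, so \(G\) is supersoluble.

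There is no real obstacle. The only point to watch is that normality is needed in \(G\) itself and not just in the preceding term, and this is supplied by two facts: subgroups of a cyclic normal subgroup are characteristic in it, and subgroups of the abelian quotient \(G/A\) are normal. Theorem~\ref{Zgroups} is the only earlier result used, through the facts that \(A\) is cyclic and normal and \(G/A\) is cyclic.
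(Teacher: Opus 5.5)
Your proof is correct and takes essentially the same route as the paper. Both arguments refine a cyclic normal subgroup with cyclic quotient into steps of prime order, using that subgroups of a cyclic normal subgroup are characteristic and that subgroups of an abelian quotient pull back to normal subgroups. The only cosmetic difference is that the paper works with \(G'\) and \(G/G'\), whereas you use \(A=\langle a\rangle\). These coincide: \(bab^{-1}a^{-1}=a^{r-1}\) generates \(A\) because \((m,r-1)=1\), so \(A\leq G'\), and \(G/A\) is abelian, so \(G'\leq A\).
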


\begin{proof}
Let \(G\) be a finite \(Z\)-group. By Theorem \ref{Zgroups} both the derived
subgroup \(G'\) and the quotient \(G/G'\) are cyclic. Since \(G'\) is a finite cyclic group, there exists a series $1=N_0\triangleleft N_1\triangleleft\cdots\triangleleft N_r=G'$
such that every factor \(N_i/N_{i-1}\) has prime order. Each \(N_i\) is
characteristic in \(G'\), because a finite cyclic group has a unique subgroup
of each order. Since \(G'\) is characteristic in \(G\), it follows that
\(N_i\triangleleft G\) for every \(i\).
Similarly, since \(G/G'\) is cyclic, it has a series $1=\overline{M}_0\triangleleft\overline{M}_1
\triangleleft\cdots\triangleleft\overline{M}_s=G/G'$
whose factors have prime order. Let \(M_i\) be the inverse image of
\(\overline{M}_i\) under the canonical projection $G\longrightarrow G/G'.$
Then every \(M_i\) is normal in \(G\), and
$M_i/M_{i-1}\simeq
\overline{M}_i/\overline{M}_{i-1}$
has prime order.
Combining the two series, we obtain a normal series $1=N_0\triangleleft\cdots\triangleleft N_r=G'
=M_0\triangleleft M_1\triangleleft\cdots\triangleleft M_s=G$
all of whose factors have prime order. Therefore \(G\) is supersoluble.
\end{proof}

We also recall the following well-known properties of finite supersoluble groups.

\begin{Theorem}[{\cite[Theorem~5.4.8]{Robinson}}]
\label{thm:supersoluble-Hall-Sylow}
Let \(G\) be a finite supersoluble group. If \(p\) is the smallest
prime divisor of \(|G|\), then \(G\) has a normal Hall \(p'\)-subgroup.
Moreover, if \(q\) is the largest prime divisor of \(|G|\), then every
Sylow \(q\)-subgroup of \(G\) is normal.
\end{Theorem}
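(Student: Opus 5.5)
The statement is Theorem~\ref{thm:supersoluble-Hall-Sylow}, quoted from Robinson. The plan is to prove both parts by induction on \(|G|\), using that a supersoluble group has a chief series all of whose factors have prime order. I treat the largest prime first, since the Hall statement follows from the same mechanism.

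\emph{Step 1 (largest prime).} Let \(q\) be the largest prime divisor of \(|G|\) and fix a minimal normal subgroup \(N\) of \(G\). By supersolubility \(|N|=r\) is prime, and \(G/N\) is supersoluble.

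\emph{Case \(r=q\).} By induction, \(G/N\) has a normal Sylow \(q\)-subgroup \(Q/N\). Its preimage \(Q\) is then a normal \(q\)-subgroup of \(G\) of full \(q\)-part, hence a normal Sylow \(q\)-subgroup of \(G\). If \(q\nmid |G/N|\), then \(Q=N\) works directly.

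\emph{Case \(r\neq q\).} By induction, \(G/N\) has a normal Sylow \(q\)-subgroup \(M/N\), so \(M=N\rtimes Q\) with \(Q\in\operatorname{Syl}_q(G)\). Now \(Q\) acts on \(N\simeq C_r\) through \(\operatorname{Aut}(C_r)\simeq C_{r-1}\). Since \(r<q\), we have \(q\nmid r-1\), so the action is trivial and \(M=N\times Q\). Then \(Q\) is characteristic in \(M\), being its unique Sylow \(q\)-subgroup, and \(M\trianglelefteq G\), so \(Q\trianglelefteq G\).

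\emph{Step 2 (smallest prime).} Let \(p\) be the smallest prime divisor of \(|G|\). I would argue by induction that the \(p'\)-part has a normal complement to a Sylow \(p\)-subgroup. The cleanest route is to show that \(G\) has a Sylow tower: there is a normal series whose successive factors are Sylow subgroups for the primes in decreasing order.

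By Step~1, \(Q_1\in\operatorname{Syl}_{q_1}(G)\) is normal for the largest prime \(q_1\). The quotient \(G/Q_1\) is again supersoluble, so its Sylow subgroup for the next largest prime is normal, and so on. Lifting these back to \(G\) gives normal subgroups \(1\le Q_1\le Q_1Q_2\le\cdots\). The term just before the last, which involves all primes except \(p\), is a normal subgroup of order \(|G|_{p'}\). That is the required normal Hall \(p'\)-subgroup.

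The main obstacle is the case \(r\neq q\) in Step~1. This is where the coprimality \(q\nmid r-1\) must be used, and it relies on the existence of a normal subgroup of prime order, which comes from supersolubility. The rest is routine induction.
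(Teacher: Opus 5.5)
The paper gives no proof of this statement; it is quoted from Robinson. So there is no internal argument to compare against, and your proposal supplies a correct, self-contained proof.

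The one point you leave implicit is that a minimal normal subgroup $N$ of a supersoluble group has prime order. Take a defining series $1=G_0\trianglelefteq\cdots\trianglelefteq G_n=G$ and the least $i$ with $N\cap G_i\neq 1$. Minimality of $N$ forces $N\le G_i$ and $N\cap G_{i-1}=1$, so $N$ embeds in $G_i/G_{i-1}$. You could also drop minimality altogether and take $N=G_1$, which is already a normal subgroup of prime order.

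The rest is sound:
\begin{itemize}
\item In the case $r\neq q$ you correctly use that $q$ is still the largest prime dividing $|G/N|$.
\item $Q$ acts on $N\simeq C_r$ through $\operatorname{Aut}(C_r)\simeq C_{r-1}$, and $q\nmid r-1$, so the action is trivial.
\item The unique Sylow $q$-subgroup of $M\trianglelefteq G$ is therefore normal in $G$.
\item Iterating Step~1 on successive quotients gives a Sylow tower, and its penultimate term is the required normal Hall $p'$-subgroup.
\end{itemize}

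This is the standard Sylow-tower argument for finite supersoluble groups. The coprimality $q\nmid r-1$ for $r<q$ is its essential ingredient.
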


\section{Simple skew braces whose multiplicative group is a $Z$-group} \label{Section simple multiplicative Z group}

In this section, we classify finite simple skew braces whose multiplicative group is a $Z$-group. We treat separately the cases in which the additive group is soluble and insoluble.

\subsection{The soluble additive case}

We first describe the small \(2\)-subgroups of a \(Z\)-group.

\begin{Lemma}\label{lem:small-2-subgroups-general-Z}
Let \(G\) be a finite \(Z\)-group of even order. Then
\(G=N\rtimes P\), where \(N\) is the normal Hall \(2'\)-subgroup of \(G\)
and \(P=\langle y\rangle\simeq C_{2^n}\) is a Sylow \(2\)-subgroup of
\(G\). Let
\[
2^s=
\left|
\operatorname{Im}\bigl(P\longrightarrow\operatorname{Aut}(N)\bigr)
\right|.
\]
For every integer \(k\) satisfying \(0\leq k\leq n-s\), the group \(G\)
has a unique subgroup of order \(2^k\), namely
\(\langle y^{2^{n-k}}\rangle\). In particular, this subgroup is central
in \(G\).
\end{Lemma}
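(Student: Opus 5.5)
Our plan has three steps: obtain the decomposition \(G=N\rtimes P\) from the structure of \(Z\)-groups, show that \(y^{2^{n-k}}\) is central for \(k\le n-s\), and then prove uniqueness by a Sylow argument.

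\emph{The decomposition.} By Lemma~\ref{ZGroupsSupersoluble}, \(G\) is supersoluble. Since \(2\) is the smallest prime divisor of \(|G|\), Theorem~\ref{thm:supersoluble-Hall-Sylow} gives a normal Hall \(2'\)-subgroup \(N\). A Sylow \(2\)-subgroup \(P\) is a complement to \(N\), so \(G=N\rtimes P\). Since \(G\) is a \(Z\)-group, \(P=\langle y\rangle\simeq C_{2^n}\).

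\emph{Centrality.} Let \(\varphi:P\to\operatorname{Aut}(N)\) be the conjugation action. Its image has order \(2^s\), so \(\ker\varphi\) is the unique subgroup of \(P\) of index \(2^s\), namely \(\langle y^{2^s}\rangle\). For \(0\le k\le n-s\), the subgroup \(\langle y^{2^{n-k}}\rangle\) has order \(2^k\) and lies in \(\langle y^{2^s}\rangle\), because \(n-k\ge s\). Hence it centralizes \(N\). It also centralizes \(P\), since \(P\) is abelian. As \(G=NP\), it is central in \(G\).

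\emph{Uniqueness.} Let \(K\le G\) have order \(2^k\). By Sylow's theorem, \(K\le gPg^{-1}\) for some \(g\in G\). The group \(gPg^{-1}\) is cyclic of order \(2^n\), so it has a unique subgroup of order \(2^k\), namely \(g\langle y^{2^{n-k}}\rangle g^{-1}\). This subgroup equals \(\langle y^{2^{n-k}}\rangle\) because that subgroup is central. Hence \(K=\langle y^{2^{n-k}}\rangle\).

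There is no real obstacle. The only point needing care is the identification \(\ker\varphi=\langle y^{2^s}\rangle\), which follows because subgroups of a cyclic \(2\)-group are determined by their order.
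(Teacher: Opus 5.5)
Your proof is correct and follows essentially the same route as the paper. Supersolubility gives the normal Hall $2'$-subgroup; $\langle y^{2^{n-k}}\rangle$ lies in the kernel $\langle y^{2^s}\rangle$ of the action on $N$ and is therefore central; and uniqueness follows from Sylow conjugacy together with the fact that a cyclic group has exactly one subgroup of each order.
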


\begin{proof}
By Lemma \ref{ZGroupsSupersoluble} \(G\) is supersoluble. Since \(2\) is the smallest prime divisor of \(|G|\),
Theorem~\ref{thm:supersoluble-Hall-Sylow} yields a normal Hall
\(2'\)-subgroup \(N\) of \(G\). Consequently $G=N\rtimes P$
for every \(P\in\operatorname{Syl}_2(G)\). Let \(0\leq k\leq n-s\), and put
\(U_k=\langle y^{2^{n-k}}\rangle\). Since \(2^{n-k}\) is divisible by
\(2^s\), the subgroup \(U_k\) is contained in the kernel of the action of
\(P\) on \(N\). Therefore \(U_k\leq C_G(N)\). Since \(P\) is abelian,
\(U_k\) also centralizes \(P\), and hence \(U_k\leq Z(G)\). Every Sylow \(2\)-subgroup of \(G\) is conjugate to \(P\), and every
\(2\)-subgroup of \(G\) is contained in a Sylow \(2\)-subgroup. Since
\(U_k\) is central, every Sylow \(2\)-subgroup contains \(U_k\).
Moreover, each Sylow \(2\)-subgroup is cyclic and therefore has a unique
subgroup of order \(2^k\). Consequently, \(U_k\) is the unique subgroup
of order \(2^k\) in \(G\).
\end{proof}

\begin{Lemma}\label{lem:characteristic-2-subgroup-ideal}
Let \(B\) be a finite skew brace such that \((B,\cdot)\) is a \(Z\)-group
of even order. Write
\((B,\cdot)=N\rtimes P\), where \(N\) is its normal Hall
\(2'\)-subgroup and \(P\simeq C_{2^n}\). Let
\[
2^s=
\left|
\operatorname{Im}\bigl(P\longrightarrow\operatorname{Aut}(N)\bigr)
\right|.
\]
Suppose that \(K\) is a non-trivial characteristic \(2\)-subgroup of
\((B,+)\) such that \(|K|=2^k\) and \(k\leq n-s\). Then \(K\) is an
ideal of \(B\).
\end{Lemma}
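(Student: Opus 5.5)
Since \(K\) is characteristic in \((B,+)\), it is invariant under every \(\lambda_b\in\Aut(B,+)\). Hence \(K\) is a left ideal and normal in \((B,+)\). By Proposition~\ref{lem:characteristic-left-ideal}, \(K\) is also a subgroup of \((B,\cdot)\) of order \(2^k\). It remains only to check normality in \((B,\cdot)\), or equivalently the condition of Proposition~\ref{lemmaKer}.

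The key step is to identify \(K\) with a known subgroup of the multiplicative group. By Lemma~\ref{lem:small-2-subgroups-general-Z}, since \(k\leq n-s\), \((B,\cdot)\) has a unique subgroup of order \(2^k\), namely \(U_k=\langle y^{2^{n-k}}\rangle\), and this subgroup is central in \((B,\cdot)\). As \((K,\cdot)\) is a subgroup of \((B,\cdot)\) of order \(2^k\), uniqueness forces \(K=U_k\) as sets. Thus \(K\) is central, and in particular normal, in \((B,\cdot)\).

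Combining the three properties, \(K\) is a left ideal, normal in \((B,+)\) and normal in \((B,\cdot)\). So \(K\) is an ideal of \(B\) by the definition. One could alternatively verify \(K*B\leq K\) via Proposition~\ref{lem:left-ideal-criterion}, but this is unnecessary. I expect no real obstacle. The only point needing care is that the underlying set \(K\) is multiplicatively closed, so that the uniqueness statement of Lemma~\ref{lem:small-2-subgroups-general-Z} applies to it; this is exactly what Proposition~\ref{lem:characteristic-left-ideal} provides. The nontriviality of \(K\) plays no role in the argument.
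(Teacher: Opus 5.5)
Your proof is correct and follows the paper's argument: $K$ being characteristic in $(B,+)$ makes it a left ideal that is normal in $(B,+)$, Proposition~\ref{lem:characteristic-left-ideal} makes it a multiplicative subgroup of order $2^k$, and the uniqueness statement of Lemma~\ref{lem:small-2-subgroups-general-Z} then gives normality in $(B,\cdot)$. Your explicit identification $K=U_k$, which makes $K$ central, is only a slight sharpening of the paper's conclusion $K\trianglelefteq(B,\cdot)$.
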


\begin{proof}
Since \(K\operatorname{char}(B,+)\), the subgroup \(K\) is normal in
\((B,+)\) and invariant under the lambda action. Hence \(K\) is a left
ideal of \(B\), and therefore, by Proposition \ref{lem:characteristic-left-ideal}, we conclude that \(K\leq(B,\cdot)\). By Lemma~\ref{lem:small-2-subgroups-general-Z}, the group \((B,\cdot)\)
has a unique subgroup of order \(2^k\). Hence
\(K\trianglelefteq(B,\cdot)\). Therefore \(K\) is an ideal of \(B\).
\end{proof}

\begin{Lemma}\label{lem:automorphism-groups-small-2-groups}
Let \(P\) be a finite \(2\)-group which is cyclic, dihedral,
generalized quaternion, or isomorphic to \(C_2\times C_2\). The following statements hold:
\begin{enumerate}
    \item \(\operatorname{Aut}(C_{2^n})\) is a \(2\)-group for every
    \(n\geq1\);

    \item \(\operatorname{Aut}(D_{2^n})\) is a \(2\)-group for every
    \(n\geq3\);

    \item \(\operatorname{Aut}(Q_{2^n})\) is a \(2\)-group for every
    \(n\geq4\);

    \item
    \(\operatorname{Aut}(Q_8)\simeq S_4\) and
    \(\operatorname{Aut}(C_2\times C_2)\simeq S_3\).
\end{enumerate}
\end{Lemma}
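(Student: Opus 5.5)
The plan is to treat the four items separately, using standard descriptions of each automorphism group. In every case the aim is to exhibit \(|\operatorname{Aut}(P)|\) explicitly, or at least to show it is a power of \(2\).

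For (1), I will use \(\operatorname{Aut}(C_{2^n})\simeq(\mathbb{Z}/2^n\mathbb{Z})^\times\), which has order \(\varphi(2^n)=2^{n-1}\). Hence it is a \(2\)-group.

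For (2), write \(D_{2^n}=\langle r,s\mid r^{2^{n-1}}=s^2=1,\ srs^{-1}=r^{-1}\rangle\) with \(n\geq3\).
\begin{itemize}
\item Since \(n\geq3\), the cyclic subgroup \(\langle r\rangle\) of index \(2\) is characteristic: it is the unique cyclic subgroup of order \(2^{n-1}\), because every element outside it is an involution.
\item An automorphism is therefore determined by \(r\mapsto r^a\) with \(a\) odd, and by \(s\mapsto r^b s\) with \(b\) arbitrary.
\item Every such pair \((a,b)\) defines an automorphism, since the defining relations are preserved.
\end{itemize}
Thus \(|\operatorname{Aut}(D_{2^n})|=2^{n-2}\cdot 2^{n-1}\), a power of \(2\).

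For (3), write \(Q_{2^n}=\langle x,y\mid x^{2^{n-1}}=1,\ y^2=x^{2^{n-2}},\ yxy^{-1}=x^{-1}\rangle\) with \(n\geq4\).
\begin{itemize}
\item Every element outside \(\langle x\rangle\) has order \(4\), and \(2^{n-1}\geq8\). Hence \(\langle x\rangle\) is the unique cyclic subgroup of order \(2^{n-1}\), and so it is characteristic.
\item An automorphism is determined by \(x\mapsto x^a\) with \(a\) odd, and by \(y\mapsto x^b y\).
\item All such pairs occur: \((x^by)^2=x^{2^{n-2}}\) and conjugation by \(x^by\) inverts \(x^a\).
\end{itemize}
This again gives order \(2^{n-2}\cdot2^{n-1}\). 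This is the one step that needs a little care, namely checking that every pair really extends to an automorphism. The relation check above settles it.

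For (4), \(\operatorname{Aut}(C_2\times C_2)=\operatorname{GL}_2(\mathbb{F}_2)\), which acts faithfully as all permutations of the three non-identity elements, so it is \(S_3\).

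For \(Q_8\), I will proceed as follows.
\begin{itemize}
\item \(\operatorname{Inn}(Q_8)\simeq Q_8/Z(Q_8)\simeq C_2\times C_2\).
\item \(\operatorname{Aut}(Q_8)\) acts on the three cyclic subgroups \(\langle i\rangle,\langle j\rangle,\langle k\rangle\) of order \(4\).
\item A count shows \(|\operatorname{Aut}(Q_8)|=24\): \(i\) may go to any of the \(6\) elements of order \(4\), and then \(j\) to any of the \(4\) elements of order \(4\) outside \(\langle i'\rangle\).
\item The map to \(S_3\) is surjective, since the cyclic permutation \(i\mapsto j\mapsto k\) and the map \(i\leftrightarrow j\), \(k\mapsto -k\) are automorphisms. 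Its kernel is \(\operatorname{Inn}(Q_8)\).
\item Hence \(\operatorname{Aut}(Q_8)\) is an extension of \(C_2\times C_2\) by \(S_3\). The map \(\operatorname{Aut}(Q_8)\to\operatorname{Aut}(\operatorname{Inn}(Q_8))\simeq S_3\) exhibits it as \((C_2\times C_2)\rtimes S_3\simeq S_4\).
\end{itemize}
Alternatively, one can cite this classical fact directly.
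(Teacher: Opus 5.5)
Items (1)--(3) and \(\operatorname{Aut}(C_2\times C_2)\simeq S_3\) are correct. They are an explicit version of the ``standard presentations'' argument the paper cites from Huppert, and your fallback of quoting \(\operatorname{Aut}(Q_8)\simeq S_4\) is exactly what the paper does.

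The one step that does not follow is your last bullet for \(Q_8\). You have a surjection \(\operatorname{Aut}(Q_8)\to S_3\) with kernel \(\operatorname{Inn}(Q_8)\simeq C_2\times C_2\), on which the quotient acts faithfully. This only shows that \(\operatorname{Aut}(Q_8)\) is \emph{an extension} of \(C_2\times C_2\) by \(S_3\). To write it as \((C_2\times C_2)\rtimes S_3\) you need a complement, and none is exhibited.

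Your two automorphisms do not provide one. Take \(\sigma\colon i\mapsto j\mapsto k\mapsto i\) and \(\tau\colon i\leftrightarrow j,\ k\mapsto -k\). Then \(\tau\circ\sigma\circ\tau\circ\sigma=c_i\), conjugation by \(i\). So \(\tau\circ\sigma\) has order \(4\), and \(\langle\sigma,\tau\rangle\) is all of \(\operatorname{Aut}(Q_8)\), not a copy of \(S_3\).

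The repair is easy. Replace \(\tau\) by \(\rho\colon i\mapsto -j,\ j\mapsto -i,\ k\mapsto -k\), which is an automorphism since \((-j)(-i)=ji=-k\). Then \(\rho^2=1\) and \(\rho\circ\sigma\circ\rho=\sigma^{-1}\). So \(\langle\sigma,\rho\rangle\) has order at most \(6\) and maps onto \(S_3\). It is therefore a complement to \(\operatorname{Inn}(Q_8)\). Hence \(\operatorname{Aut}(Q_8)\simeq\mathbb{F}_2^2\rtimes\operatorname{GL}_2(2)=\operatorname{AGL}_2(2)\). This group acts faithfully on the four points of \(\mathbb{F}_2^2\) and has order \(24\), so it is \(S_4\).

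For the way the lemma is used in the paper, only the odd part of \(|\operatorname{Aut}(Q_8)|=24\) matters. That part of your argument is already complete.
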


\begin{proof}
These facts follow immediately from the standard presentations of cyclic, dihedral and generalized quaternion groups; see, for instance, \cite[Kapitel~I]{Huppert1967}. The exceptional isomorphisms \(\operatorname{Aut}(Q_8)\simeq S_4\) and \(\operatorname{Aut}(C_2\times C_2)\simeq S_3\) are standard.
\end{proof}

We can now prove the main result of this section. We denote by \(S_{12,22}\) and \(S_{12,23}\) the two simple skew braces
of order \(12\) appearing in the classification of skew braces of small
order in \cite{KonovalovSmoktunowiczVendramin}. Both have additive group
isomorphic to \(A_4\) and multiplicative group isomorphic to the
non-abelian semidirect product \(C_3\rtimes C_4\).

\begin{proof}[Proof of Theorem \ref{thm:A}]

Assume that $|B|$ has not prime order. We divide the proof in several steps.

\medskip

\noindent\textbf{Step 1}: \textit{The order of \(B\) is even and \((B,\cdot)=N\rtimes P\).}

\medskip 

Suppose, to the contrary, that
\(|B|\) is odd, and let \(p\) be the smallest prime divisor of \(|B|\).
By \cite[Theorem C]{DameleZGroups} $B$ is supersoluble. Thus \(B\) admits a Hall \(p'\)-ideal \(H\). Since \(B\) is simple and
\(H\neq B\), we have \(H=0\). Thus \(|B|\) is a power of
\(p\). By \cite[Proposition~4.4]{CedoSmoktunowiczVendramin2019} a simple skew brace of
prime-power order has order \(p\). Therefore \(B\simeq C_p\),
contradicting the assumption that \(B\) has not prime order. Consequently,
\(|B|\) is even. By Lemma~\ref{lem:small-2-subgroups-general-Z}, we may write
\((B,\cdot)=N\rtimes P\), where \(N\) is the normal Hall
\(2'\)-subgroup of \((B,\cdot)\) and
\(P=\langle y\rangle\simeq C_{2^n}\), with \(n\geq1\). Put $2^s=
\left|
\operatorname{Im}\bigl(P\longrightarrow\operatorname{Aut}(N)\bigr)
\right|.$

\medskip 

\noindent\textbf{Step 2}: \textit{\(O_{2'}(B,+)=1\).}

\medskip 

Suppose that \(O_{2'}(B,+)\neq 1\). Then there exists an odd prime \(q\) such that
\(O_q(B,+)\neq 1\). Choose \(q\) maximal with this property. By Proposition \ref{lem:additive-almost-Sylow-cyclic} \(O_q(B,+)\) is cyclic. Let \(M\) be its
unique subgroup of order \(q\). Then
\(M\operatorname{char}O_q(B,+)\), while
\(O_q(B,+)\operatorname{char}(B,+)\), and therefore
\(M\operatorname{char}(B,+)\). It follows that \(M\trianglelefteq(B,+)\) and that \(M\) is invariant
under the lambda action. Thus \(M\) is a left ideal of \(B\). In
particular, by Proposition \ref{lem:characteristic-left-ideal}, \(M\) is a subgroup of \((B,\cdot)\).
We claim that \(q\) is the largest prime divisor of \(|B|\). Suppose, to
the contrary, that a prime \(r>q\) divides \(|B|\). By the maximality of
\(q\), we have \(O_r(B,+)=1\), and hence
\(r\nmid |F(B,+)|\). Since \((B,+)\) is soluble, the Fitting centralizer theorem \cite[Theorem~5.4.4(ii)]{Robinson} yields
\(C_{(B,+)}(F(B,+))\leq F(B,+)\). Therefore
$(B,+)/C_{(B,+)}(F(B,+))$
embeds into \(\operatorname{Aut}(F(B,+))\). Since
\(r\nmid |F(B,+)|\), it follows that
\(r\mid |\operatorname{Aut}(F(B,+))|\). We have
\[
F(B,+)=O_2(B,+)\times
\prod_{\substack{O_t(B,+)\neq 1\\ t\text{ odd}}}O_t(B,+).
\]
The factors in this direct product have pairwise coprime orders and are
characteristic in \(F(B,+)\). Consequently,
\[
\operatorname{Aut}(F(B,+))
\simeq
\operatorname{Aut}(O_2(B,+))
\times
\prod_{\substack{O_t(B,+)\neq 1\\ t\text{ odd}}}
\operatorname{Aut}(O_t(B,+)).
\]
For every odd prime \(t\) such that \(O_t(B,+)\neq 1\), the group
\(O_t(B,+)\) is cyclic. Hence every prime divisor of
\(|\operatorname{Aut}(O_t(B,+))|\) is at most \(t\), and therefore at
most \(q\). Moreover, \(O_2(B,+)\) is contained in a Sylow \(2\)-subgroup of
\((B,+)\). By Proposition \ref{lem:additive-almost-Sylow-cyclic} \(O_2(B,+)\) is  cyclic, dihedral, generalized quaternion, or isomorphic to a subgroup of \(C_2\times C_2\). Consequently, by Lemma \ref{lem:automorphism-groups-small-2-groups}, the only possible odd prime divisor of \(|\operatorname{Aut}(O_2(B,+))|\) is \(3\). Thus every prime divisor of
\(|\operatorname{Aut}(F(B,+))|\) is at most \(q\), contradicting
\(r>q\). Therefore \(q\) is the largest prime divisor of \(|B|\). By
Lemma \ref{ZGroupsSupersoluble} \((B,\cdot)\) is supersoluble. Since \(q\) is the largest prime divisor of \(|B|\),
Theorem~\ref{thm:supersoluble-Hall-Sylow} implies that every Sylow
\(q\)-subgroup of \((B,\cdot)\) is normal. Hence, if
\(Q\in \operatorname{Syl}_q(B,\cdot)\), then $Q\trianglelefteq (B,\cdot).$ Moreover, \(Q\) is cyclic. Since \(M\) is a
multiplicative subgroup of order \(q\), we have \(M\leq Q\). The group
\(Q\) has a unique subgroup of order \(q\), and therefore
\(M\operatorname{char}Q\). Since \(Q\trianglelefteq(B,\cdot)\), it
follows that \(M\trianglelefteq(B,\cdot)\).
Thus \(M\) is an ideal of \(B\). Since \(|M|=q\) and \(B\) is not of
prime order, \(M\) is non-trivial and proper.
This contradiction proves that \(O_{2'}(B,+)=1\).

\medskip

\noindent\textbf{Step 3}: \textit{A small characteristic \(2\)-subgroup of \((B,+)\).}

\medskip

Since \((B,+)\) is soluble, \(F(B,+)\neq1\). Moreover, all its normal
Sylow subgroups of odd order are trivial by Step~2. Hence
\(F(B,+)=O_2(B,+)\neq1\).
By Proposition~\ref{lem:additive-almost-Sylow-cyclic}, the \(2\)-rank of
\(O_2(B,+)\), that is, the largest integer \(r\) such that
\(O_2(B,+)\) contains an elementary abelian subgroup of order \(2^r\),
is at most \(2\). Set
\[
K=\Omega_1\bigl(Z(O_2(B,+))\bigr),
\]
where \(\Omega_1\bigl(Z(O_2(B,+))\bigr)\) denotes the subgroup generated
by the elements of order \(2\) in \(Z(O_2(B,+))\). Then \(K\neq1\), \(K\operatorname{char}(B,+)\), and $|K|\leq4.$

\medskip 

\noindent\textbf{Step 4}: \textit{The case \(n-s\geq2\) cannot occur.} 

\smallskip

Suppose  that \(n-s\geq2\). Then
\(v_2(|K|)\leq2\leq n-s\). By
Lemma~\ref{lem:characteristic-2-subgroup-ideal}, \(K\) is a non-trivial
proper ideal of \(B\), a contradiction. Therefore \(n-s\in\{0,1\}\).

\medskip 

\noindent\textbf{Step 5}: \textit{The case \(n-s=0\) cannot occur.} 

\smallskip

Suppose that \(n-s=0\). Recall that $F(B,+)=O_2(B,+).$ By the Fitting centralizer theorem
\cite[Theorem~5.4.4(ii)]{Robinson}, we have $C_{(B,+)}\bigl(F(B,+)\bigr)\leq F(B,+).$
Since \(F(B,+)\operatorname{char}(B,+)\), conjugation induces a
homomorphism $(B,+)\longrightarrow\operatorname{Aut}\bigl(F(B,+)\bigr)$
whose kernel is $C_{(B,+)}\bigl(F(B,+)\bigr).$
Consequently $(B,+)/C_{(B,+)}\bigl(F(B,+)\bigr)
\hookrightarrow
\operatorname{Aut}\bigl(F(B,+)\bigr).$
As $C_{(B,+)}\bigl(F(B,+)\bigr)\leq F(B,+),$
it follows that $\bigl|(B,+)/F(B,+)\bigr|
\mid
\bigl|(B,+)/C_{(B,+)}(F(B,+))\bigr|,$
and hence $\bigl|(B,+)/F(B,+)\bigr|
\mid
\left|\operatorname{Aut}\bigl(F(B,+)\bigr)\right|.$ By Proposition~\ref{lem:additive-almost-Sylow-cyclic}, the group
\(F(B,+)\) is contained in a Sylow \(2\)-subgroup of \((B,+)\) which
is cyclic, dihedral, generalized quaternion, or isomorphic to
\(C_2\times C_2\). Therefore \(F(B,+)\) is itself cyclic, dihedral,
generalized quaternion, or elementary abelian of order at most \(4\). By Lemma \ref{lem:automorphism-groups-small-2-groups} it follows that $\left|
\operatorname{Aut}\bigl(F(B,+)\bigr)
\right|_{2'}
\leq 3,$
where, for a positive integer \(m\), we denote by \(m_{2'}\) the largest
odd divisor of \(m\).
Since \(F(B,+)\) is a \(2\)-group, the odd part of
\(\lvert(B,+)/F(B,+)\rvert\) is equal to the odd part of \(|B|\),
namely \(|N|\). Hence $|N|\mid
\left|
\operatorname{Aut}\bigl(F(B,+)\bigr)
\right|_{2'},$
and therefore $|N|\mid3.$
Since \(N\neq1\), we obtain $N\simeq C_3.$
The assumption \(n-s=0\) means that the action
$P\longrightarrow\operatorname{Aut}(N)$
is faithful. Since $P\simeq C_{2^n},$ $N\simeq C_3$ and $\operatorname{Aut}(C_3)\simeq C_2,$
the cyclic group \(C_{2^n}\) embeds into \(C_2\). Thus \(n=1\), and
hence $|B|=|N||P|=3\cdot2=6.$
By \cite[Proposition~3.6]{KonovalovSmoktunowiczVendramin}, no skew
brace of order \(6\) is simple. This contradiction shows that the
case \(n-s=0\) cannot occur.

\medskip 

\noindent\textbf{Step 6}: \textit{The case \(n-s=1\) and the conclusion of the proof.} 

\smallskip

We must have \(n-s=1\). If \(|K|=2\), then
Lemma~\ref{lem:characteristic-2-subgroup-ideal} again implies that \(K\)
is an ideal of \(B\), a contradiction. Hence \(|K|=4\).
Since \(O_2(B,+)\) is contained in a cyclic, dihedral, generalized
quaternion, or elementary abelian Sylow \(2\)-subgroup, the equality $|\Omega_1(Z(O_2(B,+)))|=4$
forces \(O_2(B,+)\simeq C_2\times C_2\). Thus
\(F(B,+)=O_2(B,+)\simeq C_2\times C_2\).
By the Fitting centralizer theorem \cite[Theorem~5.4.4(ii)]{Robinson} we have $C_{(B,+)}(F(B,+))\leq F(B,+).$
Since \(F(B,+)\leq C_{(B,+)}(F(B,+))\), we have
$C_{(B,+)}(F(B,+))=F(B,+).$
Therefore $(B,+)/F(B,+)$
embeds into $\operatorname{Aut}(F(B,+))
\simeq\operatorname{GL}_2(2)\simeq S_3.$
Consequently we have $|(B,+)/F(B,+)|\mid6.$
Since \(|B|=|N|2^n\) and \(|F(B,+)|=4\), we obtain $|N|2^{n-2}\mid6.$ As \(|N|>1\) is odd, it follows that \(|N|=3\) and \(n\leq3\).
If \(n=3\), then \(n-s=1\) gives \(s=2\). This is impossible, since the
action of \(P\simeq C_8\) on \(N\simeq C_3\) would have image of order
\(4\), whereas \(\operatorname{Aut}(C_3)\simeq C_2\). Therefore \(n=2\).
It follows that \(|N|=3\) and \(s=1\). Thus
\((B,\cdot)\simeq C_3\rtimes C_4\), where the action is non-trivial. Thus \(B\) is a simple skew brace of order \(12\). It follows from
\cite[Proposition~3.6]{KonovalovSmoktunowiczVendramin} that $B\simeq S_{12,22}$ or $B\simeq S_{12,23}.$

\end{proof}

\subsection{The insoluble additive case}

In this subsection, we determine the additive group of a finite simple skew
brace whose multiplicative group is a \(Z\)-group, under the assumption
that the additive group is insoluble. We briefly recall the definitions
and the properties of the classical groups of degree \(2\) that will
be used in the proof. 

Let \(p\) be an odd prime. The special linear group, the projective special linear group and the projective general linear group are defined, respectively, by

\[
\begin{aligned}
\operatorname{SL}_2(p) &= \{A\in\operatorname{GL}_2(p):\det(A)=1\},\\ \operatorname{PSL}_2(p) &= \operatorname{SL}_2(p)/Z(\operatorname{SL}_2(p)),\\ \operatorname{PGL}_2(p) &= \operatorname{GL}_2(p)/Z(\operatorname{GL}_2(p)).
\end{aligned}
\]
We refer to \cite[Kapitel~II, \S8]{Huppert1967} for the standard
properties of these groups.

\begin{Lemma}\label{lem:linear-groups-degree-two}
Let \(p\geq5\) be a prime, and write $2^a=|\operatorname{SL}_2(p)|_2.$
Then the following statements hold:
\begin{enumerate}
    \item $ |\operatorname{SL}_2(p)|
    =
    |\operatorname{PGL}_2(p)|
    =
    p(p^2-1),$
    and therefore $|\operatorname{SL}_2(p)|_2
    =
    |\operatorname{PGL}_2(p)|_2
    =
    2^a;$

    \item $\operatorname{Aut}(\operatorname{PSL}_2(p))
\simeq
\operatorname{Aut}(\operatorname{SL}_2(p))
\simeq
\operatorname{PGL}_2(p);$
    \item a Sylow \(2\)-subgroup of
    \(\operatorname{PGL}_2(p)\) is dihedral of order \(2^a\).
    Consequently, every cyclic \(2\)-subgroup of
    \(\operatorname{PGL}_2(p)\) has order at most \(2^{a-1}\).
\end{enumerate}
\end{Lemma}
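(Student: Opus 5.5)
We will verify the three items by standard counting and structural facts about $\operatorname{GL}_2(p)$, following \cite[Kapitel~II, \S8]{Huppert1967}.

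For (1), we count ordered bases of $\mathbb{F}_p^2$ to get $|\operatorname{GL}_2(p)|=(p^2-1)(p^2-p)$. The determinant $\operatorname{GL}_2(p)\to\mathbb{F}_p^\times$ is surjective, so $|\operatorname{SL}_2(p)|=p(p^2-1)$. Since $Z(\operatorname{GL}_2(p))$ consists of the scalar matrices and has order $p-1$, we also get $|\operatorname{PGL}_2(p)|=p(p^2-1)$. Equality of the $2$-parts is then immediate.

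For (2), we first recall that $\operatorname{PSL}_2(p)$ is simple for $p\geq5$ and that $\operatorname{Aut}(\operatorname{PSL}_2(p))\simeq\operatorname{P\Gamma L}_2(p)$. Since $\mathbb{F}_p$ has no non-trivial field automorphisms, this equals $\operatorname{PGL}_2(p)$, acting by conjugation. Next, the central quotient $\operatorname{SL}_2(p)\to\operatorname{PSL}_2(p)$ induces a homomorphism $\operatorname{Aut}(\operatorname{SL}_2(p))\to\operatorname{Aut}(\operatorname{PSL}_2(p))$, because the centre is characteristic.
\begin{itemize}
\item \emph{Injectivity.} Suppose $\alpha$ induces the identity on the quotient. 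Then $x\mapsto x^{-1}\alpha(x)$ is a homomorphism $\operatorname{SL}_2(p)\to Z\simeq C_2$. It is trivial because $\operatorname{SL}_2(p)$ is perfect for $p\geq5$, so $\alpha=\operatorname{id}$.
\item \emph{Surjectivity.} Conjugation by elements of $\operatorname{GL}_2(p)$ gives automorphisms of $\operatorname{SL}_2(p)$ whose images already exhaust $\operatorname{PGL}_2(p)$.
\end{itemize}
This is the step that relies most on quoting the literature, namely simplicity and the automorphism group of $\operatorname{PSL}_2(p)$.

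For (3), we use the facts that $\operatorname{PGL}_2(p)$ contains dihedral subgroups $D_{2(p-1)}$ and $D_{2(p+1)}$, the normalizers of the images of the split and non-split tori. Exactly one of $p\pm1$ is divisible by $4$; write it as $2^{a-2}m$ with $m$ odd. Recall that $2^a=|\operatorname{SL}_2(p)|_2$ and $(p^2-1)_2=2^{a}$, while the other of $p\pm1$ contributes exactly one factor $2$. The corresponding dihedral group therefore has order $2(p\pm1)$ with $2$-part $2^{a-1}\cdot$... Let us recompute carefully. We have $(p-1)_2(p+1)_2=2^a$ and the smaller of the two $2$-parts is $2$, so the larger is $2^{a-1}$. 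Hence $D_{2(p\pm1)}$ has $2$-part $2^a$, and its Sylow $2$-subgroup is $D_{2^a}$. By (1) this is a Sylow $2$-subgroup of $\operatorname{PGL}_2(p)$ (for $a=3$ the group $D_8$ is still dihedral).

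A cyclic $2$-subgroup of $\operatorname{PGL}_2(p)$ lies in a conjugate of this $D_{2^a}$. Since $a\geq3$, the largest cyclic subgroup of $D_{2^a}$ has order $2^{a-1}$, which gives the final claim.
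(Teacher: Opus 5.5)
Your proposal is correct and follows essentially the paper's route: the paper simply cites Huppert, Kapitel~II, \S8, for all three items, while you derive $\operatorname{Aut}(\operatorname{SL}_2(p))\simeq\operatorname{Aut}(\operatorname{PSL}_2(p))$ from the perfectness of $\operatorname{SL}_2(p)$ and locate a dihedral Sylow $2$-subgroup inside the normalizer of the appropriate torus, still quoting simplicity, $\operatorname{Aut}(\operatorname{PSL}_2(p))\simeq\operatorname{P\Gamma L}_2(p)$ and the existence of those dihedral normalizers. The only thing to fix is the abandoned first attempt in (3): the member of $\{p-1,p+1\}$ divisible by $4$ has $2$-part $2^{a-1}$, not $2^{a-2}$, so that line and the unfinished sentence after it should be deleted in favour of your correct recomputation.
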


\begin{proof}
These are standard properties of the finite linear groups of degree
\(2\); see \cite[Kapitel~II, \S8]{Huppert1967}. 
\end{proof}

We shall use the following classification theorem of Suzuki and Wong.

\begin{Theorem}[{\cite[Theorem~2]{WongSemidihedral}}]
\label{thm:Suzuki-Wong}
Let \(G\) be a finite insoluble group such that every Sylow subgroup of odd order is cyclic and a Sylow \(2\)-subgroup has a cyclic subgroup of index at most \(2\). Then \(G\) has a normal subgroup \(G_1\) satisfying $[G:G_1]\leq2$ and $G_1=L\times M,$ where $L\simeq\operatorname{PSL}_2(p)$ or $L\simeq\operatorname{SL}_2(p)$ for some prime \(p\geq5\), while \(M\) is a \(Z\)-group such that $\gcd(|L|,|M|)=1.$
\end{Theorem}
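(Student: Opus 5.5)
Since the statement is a cited classification theorem, I only sketch how I would reconstruct it from standard results. The argument would first identify the unique non-abelian composition factor, and then rebuild \(G\) around it. The hypotheses pass to subgroups and to quotients. Every Sylow subgroup of odd order stays cyclic, and a Sylow \(2\)-subgroup \(S\) keeps a cyclic subgroup of index at most \(2\). By the standard classification of such \(2\)-groups, \(S\) is therefore cyclic, of the form \(C_{2^n}\times C_2\), modular, dihedral, semidihedral or generalized quaternion. If \(S\) is cyclic, Burnside's transfer theorem gives a normal \(2\)-complement. That complement has all Sylow subgroups cyclic, so it is a \(Z\)-group, and \(G\) would be soluble. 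Hence \(S\) is non-cyclic.

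The core step is to determine the non-abelian simple sections. Let \(T\) be a non-abelian simple composition factor of \(G\). Its Sylow \(2\)-subgroups are sections of \(S\), and a non-abelian simple group cannot have abelian non-cyclic Sylow \(2\)-subgroups of type \(C_{2^n}\times C_2\) with \(n\ge2\). In the remaining cases I would use the classical classifications:
\begin{itemize}
  \item the generalized quaternion case is excluded by the Brauer--Suzuki theorem;
  \item in the dihedral case the Gorenstein--Walter theorem gives \(T\simeq\operatorname{PSL}_2(q)\) with \(q\) odd, or \(T\simeq A_7\);
  \item in the semidihedral case the Alperin--Brauer--Gorenstein theorem gives \(T\simeq\operatorname{PSL}_3(q)\), \(\operatorname{PSU}_3(q)\) or \(M_{11}\);
  \item the modular case is excluded by transfer, or by fusion arguments.
\end{itemize}
Now impose that the odd Sylow subgroups are cyclic. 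The groups \(A_7\), \(M_{11}\), \(\operatorname{PSL}_3(q)\) and \(\operatorname{PSU}_3(q)\) all contain a non-cyclic odd Sylow subgroup, for instance an elementary abelian subgroup of order \(9\) or of order \(r^2\) in a torus, so they are excluded. For \(\operatorname{PSL}_2(q)\) with \(q=p^f\), the Sylow \(p\)-subgroup is elementary abelian of order \(q\), so \(f=1\). The case \(p=3\) gives \(\operatorname{PSL}_2(3)\simeq A_4\), which is soluble, so \(p\ge5\).

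Next I would assemble \(G\). Let \(O(G)\) be the largest normal subgroup of odd order. It is a \(Z\)-group and hence soluble. I would analyse \(\overline G=G/O(G)\) through its generalized Fitting subgroup. Every elementary abelian subgroup has \(2\)-rank at most \(2\), and every odd Sylow subgroup is cyclic. Together these force \(F^*(\overline G)\) to consist of a single component \(\overline L\simeq\operatorname{PSL}_2(p)\), possibly with a \(2\)-part, and \(\overline G/\overline L\) embeds in \(\operatorname{Out}(\overline L)\simeq C_2\). Lifting back, I would put \(G_1\) equal to the preimage of \(\overline L\) and use coprime action to split it. The layer \(E(G)\) is a perfect central extension of \(\operatorname{PSL}_2(p)\), so by the Schur multiplier it is \(L\simeq\operatorname{PSL}_2(p)\) or \(\operatorname{SL}_2(p)\). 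I would then set \(M=C_{G_1}(L)\), and the aim is to get \(G_1=L\times M\) with \(M\) a \(Z\)-group.

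I expect this last step, the direct decomposition with \(\gcd(|L|,|M|)=1\), to be the main obstacle. My first attempt would use Sylow ranks. If a prime \(r\) divided both \(|L|\) and \(|M|\) with \(r\) odd, then \(L\times M\) would contain \(C_r\times C_r\), contradicting cyclicity of Sylow \(r\)-subgroups. If \(r=2\), then the \(2\)-rank of \(L\) is already \(2\), or \(L\) has quaternion Sylow subgroups when \(L\simeq\operatorname{SL}_2(p)\), and this contradicts the structure of \(S\) listed above. The same rank bound would show that \(L\cap M\le Z(L)\) can be absorbed, and that \(M\) has odd order with cyclic Sylow subgroups.
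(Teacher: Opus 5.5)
The paper gives no proof of this statement; it is quoted from Wong's Theorem~2, so your sketch has to stand on its own. Your identification of the non-abelian composition factors as \(\operatorname{PSL}_2(p)\) with \(p\geq5\) is sound, if heavy.

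\textbf{Main gap: lifting from \(\overline G=G/O(G)\) back to \(G\).} You assert that \(E(G)\) is a perfect central extension of \(\operatorname{PSL}_2(p)\) mapping onto \(\overline L\), but nothing you have said yields this. In general the preimage of a component of \(G/O(G)\) need not contain a component of \(G\): for \(G=\mathbb{F}_{11}^2\rtimes\operatorname{SL}_2(5)\) one has \(E(G)=1\), although \(E(G/O(G))\simeq\operatorname{SL}_2(5)\).

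Nor can you ``use coprime action''. The equality \(\gcd(|O(G)|,|\overline L|)=1\) is not available at that stage. Your \(C_r\times C_r\) argument obtains it only after \([L,O(G)]=1\) and \(G_1=L\,O(G)\) are known, so your final paragraph presupposes exactly what is missing.

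The missing idea is that the \(Z\)-group structure of \(O(G)\), which you use only for solubility, forces a perfect group to act trivially on it:
\begin{enumerate}
\item Let \(D\) be the last term of the derived series of the preimage \(G_1\) of \(\overline L\). Then \(D\,O(G)=G_1\).
\item Since \(O(G)'\) and \(O(G)/O(G)'\) are cyclic, the perfect group \(D\) acts trivially on both.
\item The automorphisms of \(O(G)\) centralising both factors of \(1\leq O(G)'\leq O(G)\) form an abelian group, via \(\alpha\mapsto(g\mapsto g^{-1}\alpha(g))\) with values in \(Z(O(G)')\). Hence \([D,O(G)]=1\).
\item So \(D\cap O(G)\) is a central subgroup of odd order in a perfect central extension of \(\operatorname{PSL}_2(p)\). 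It is trivial because the Schur multiplier is \(C_2\).
\end{enumerate}
Thus \(G_1=D\times O(G)\). Only now does your Sylow argument give \(\gcd(|D|,|O(G)|)=1\), with \(M=O(G)\).

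\textbf{Second error: the index bound \([G:G_1]\leq 2\).} The \(2\)-rank bound together with cyclic odd Sylow subgroups does not force \(\overline G/\overline L\hookrightarrow\operatorname{Out}(\overline L)\). Consider the central product \(\operatorname{SL}_2(p)\circ C_8\), amalgamating \(-I\) with the involution of \(C_8\). It has \(2\)-rank \(2\) and cyclic odd Sylow subgroups. Yet its quotient by \(\operatorname{SL}_2(p)\) is \(C_4\), and it admits no \(G_1\) as in the statement.

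The ``\(2\)-part'' you allow must be killed using the full hypothesis on \(S\). Note that \(O_2(\overline G)\) commutes with \(\overline L\). Let \(T\in\Syl_2(\overline L)\), which is dihedral of order at least \(4\) or generalized quaternion. Let \(U\) be a \(2\)-group centralising \(\overline L\) with \(U\not\leq Z(\overline L)\). Then \(TU\) is a \(2\)-group whose order exceeds twice its exponent, so it has no cyclic subgroup of index at most \(2\). Hence \(O_2(\overline G)\leq Z(\overline L)\). It follows that \(F^*(\overline G)=\overline L\) and \(C_{\overline G}(\overline L)=Z(\overline L)\), so \([\overline G:\overline L]\leq|\operatorname{Out}(\overline L)|=2\).

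With these two repairs your outline proves the statement.
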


The next lemma strengthens the conclusion of
Theorem~\ref{thm:Suzuki-Wong} in the form needed for skew braces.

\begin{Lemma}\label{lem:Suzuki-Wong-characteristic-factor}
Let \(G\) be as in Theorem~\ref{thm:Suzuki-Wong}, and write $G_1=L\times M$
as in that theorem. Then \(M\operatorname{char}G\) and $\gcd\left(|M|,\left|\operatorname{Aut}(G/M)\right|\right)=1.$
\end{Lemma}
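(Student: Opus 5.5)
The strategy is to exploit the coprimality in Theorem~\ref{thm:Suzuki-Wong}. First I show that \(M\) is a normal Hall subgroup of \(G\) itself, which gives characteristicity. Then I show that any automorphism of \(G/M\) of order coprime to \(|G/M|\) and to \(|\operatorname{Aut}(L)|\) must be trivial.

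\emph{Step 1: \(M\operatorname{char}G\).} Since \(|L|\) is even and \(\gcd(|L|,|M|)=1\), the order \(|M|\) is odd. Write \(\pi\) for the set of primes dividing \(|M|\). Then
\[
|G|=[G:G_1]\,|L|\,|M|
\qquad\text{with}\qquad [G:G_1]\leq 2 .
\]
No prime of \(\pi\) divides \([G:G_1]|L|\), so \(M\) is a Hall \(\pi\)-subgroup of \(G\). Now \(M\) is normal in \(G_1=L\times M\) and has order coprime to its index there. Hence \(M\) is the unique subgroup of \(G_1\) of order \(|M|\), so \(M\operatorname{char}G_1\). As \(G_1\trianglelefteq G\), it follows that \(M\trianglelefteq G\). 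A normal Hall \(\pi\)-subgroup of \(G\) is the unique subgroup of \(G\) of order \(|M|\), so \(M\operatorname{char}G\).

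\emph{Step 2: reduction for \(X=G/M\).} Put \(X=G/M\) and \(\bar L=G_1/M\simeq L\). Then \([X:\bar L]\leq 2\). Since \(L\) is perfect and \(X/\bar L\) is soluble, \(\bar L\) is the soluble residual of \(X\), so \(\bar L\operatorname{char}X\). Suppose some prime \(r\in\pi\) divides \(|\operatorname{Aut}(X)|\), and let \(\alpha\in\operatorname{Aut}(X)\) have order \(r\). Then \(\alpha\) restricts to an automorphism of \(\bar L\). By Lemma~\ref{lem:linear-groups-degree-two}, \(|\operatorname{Aut}(\bar L)|=p(p^2-1)\). The same primes divide \(|L|\), so \(r\nmid|\operatorname{Aut}(\bar L)|\). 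Hence \(\alpha\) centralizes \(\bar L\). Moreover \(\alpha\) acts trivially on \(X/\bar L\), which has order at most \(2\).

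\emph{Step 3: stability argument, the main point.} I would use the standard stability-group lemma. If \(\alpha\) is trivial on a normal subgroup \(N\) and on \(X/N\), then for \(x\in X\) set \(\delta(x)=x^{-1}\alpha(x)\). This lies in \(N\), and it centralizes \(N\) because \(\alpha(xnx^{-1})=xnx^{-1}\) for all \(n\in N\). So \(\delta(x)\in Z(N)\). The map \(\delta\) factors through \(X/N\) and is a crossed homomorphism \(X/N\to Z(N)\). Moreover \(\alpha\mapsto\delta\) is injective and multiplicative into an abelian group, so the group of such automorphisms embeds in \(Z(N)^{|X/N|}\), whose order involves only primes dividing \(|Z(N)|\).

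Apply this with \(N=\bar L\). Since \(|Z(\bar L)|\leq 2\), the order \(r\) of \(\alpha\) would be a power of \(2\), contradicting \(r\) odd. Therefore \(\gcd(|M|,|\operatorname{Aut}(G/M)|)=1\).

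I expect Step~3 to be the only delicate point. It needs the verification that \(\delta\) takes values in \(Z(\bar L)\) and that \(\alpha\mapsto\delta\) is an injective homomorphism. This is what avoids having to classify the possible extensions \(X\) of \(\operatorname{PSL}_2(p)\) or \(\operatorname{SL}_2(p)\) by a group of order at most \(2\).
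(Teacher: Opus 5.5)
Your proof is correct and follows essentially the same route as the paper. You identify $M$ as the normal Hall $\pi$-subgroup (the paper phrases this as $M=O_\pi(G)$), you show $\bar L$ is characteristic in $G/M$ as its soluble residual (the paper uses its derived subgroup), and you observe that automorphisms of $G/M$ trivial on $\bar L$ are controlled by $Z(\bar L)$, which has order at most $2$. The only difference is cosmetic: you invoke the general stability-group embedding into $Z(N)^{|X/N|}$ and argue with a single automorphism of odd prime order, whereas the paper computes $\alpha(x)=xz$ with $z\in Z(L)$ for a fixed coset representative $x$ and concludes $|\ker\rho|\le|Z(L)|$.
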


\begin{proof}
Let \(\pi=\pi(M)\) be the set of prime divisors of \(|M|\). We divide
the proof into two steps.

\medskip

\noindent\textbf{Step 1}: \textit{\(M\operatorname{char}G\).}

\medskip

Since \(L\) has even order and \(\gcd(|L|,|M|)=1\), the group \(M\)
has odd order. Moreover, $|G/M|=|L|\,[G:G_1]$
and \([G:G_1]\leq2\). Hence $\gcd(|M|,|G/M|)=1.$ In the direct product \(G_1=L\times M\), the subgroup \(M\) is the
unique Hall \(\pi\)-subgroup. Therefore \(M\operatorname{char}G_1\).
Since \(G_1\trianglelefteq G\), it follows that \(M\trianglelefteq G\). We now show that
\[
M=O_\pi(G).
\]
Indeed, let \(N\) be any normal \(\pi\)-subgroup of \(G\). Its image
\(NM/M\) is a normal \(\pi\)-subgroup of \(G/M\). Since
\(\gcd(|M|,|G/M|)=1\), no prime in \(\pi\) divides \(|G/M|\), and
therefore $NM/M=1.$
Thus \(N\leq M\). Since \(M\) is itself a normal \(\pi\)-subgroup of
\(G\), we conclude that \(M=O_\pi(G)\). Consequently,
\(M\operatorname{char}G\).

\medskip

\noindent\textbf{Step 2}: \textit{the orders of \(M\) and
\(\operatorname{Aut}(G/M)\) are coprime.}

\medskip

Put $H=G/M.$ The image of \(G_1\) in \(H\) is naturally isomorphic to \(L\).
Thus, identifying this image with \(L\), we have $L\trianglelefteq H$ and $[H:L]\leq2.$
Both \(\operatorname{PSL}_2(p)\) and \(\operatorname{SL}_2(p)\) are
perfect for \(p\geq5\). Since \(H/L\) is abelian, we have $H'\leq L.$
On the other hand $L=L'\leq H'.$
Therefore $H'=L,$
and hence \(L\operatorname{char}H\).
Restriction to \(L\) now defines a homomorphism
\[
\rho:\operatorname{Aut}(H)\longrightarrow\operatorname{Aut}(L),
\qquad
\alpha\longmapsto \alpha|_L.
\]
We claim that \(\ker\rho\) is a \(2\)-group. If \(H=L\), then
\(\ker\rho=1\). Suppose therefore that \([H:L]=2\), and fix
\(x\in H\setminus L\). Let \(\alpha\in\ker\rho\). Since
\(H/L\simeq C_2\) and \(\operatorname{Aut}(C_2)=1\), the automorphism
induced by \(\alpha\) on \(H/L\) is trivial. Hence $\alpha(x)=xz$
for some \(z\in L\).
For every \(l\in L\), the element \(xlx^{-1}\) belongs to \(L\).
Since \(\alpha\) fixes \(L\) pointwise, we obtain
\[
xlx^{-1}
=
\alpha(xlx^{-1})
=
xz\,l\,z^{-1}x^{-1}.
\]
Cancelling \(x\) and \(x^{-1}\), we get $zlz^{-1}=l$
for every \(l\in L\). Thus \(z\in Z(L)\). Moreover, \(\alpha\) is
completely determined by \(z\), because \(H=\langle L,x\rangle\).
Consequently $|\ker\rho|\leq |Z(L)|.$
Now $Z(\operatorname{PSL}_2(p))=1$ and $Z(\operatorname{SL}_2(p))\simeq C_2,$
and hence \(|\ker\rho|\leq2\). In particular, \(\ker\rho\) is a
\(2\)-group. For \(p\geq5\), by Lemma \ref{lem:linear-groups-degree-two}, we have
\[
\operatorname{Aut}(\operatorname{PSL}_2(p))
\simeq
\operatorname{Aut}(\operatorname{SL}_2(p))
\simeq
\operatorname{PGL}_2(p).
\]
Moreover,
\[
|\operatorname{PGL}_2(p)|
=
|\operatorname{SL}_2(p)|
=
2|\operatorname{PSL}_2(p)|.
\]
It follows that every odd prime divisor of
\(|\operatorname{Aut}(L)|\) divides \(|L|\). Since \(\ker\rho\) is a
\(2\)-group, every odd prime divisor of
\(|\operatorname{Aut}(H)|\) also divides \(|L|\). Finally, \(|M|\) is odd and \(\gcd(|M|,|L|)=1\). Therefore $\gcd\left(|M|,|\operatorname{Aut}(H)|\right)=1.$
Since \(H=G/M\), the result follows.
\end{proof}

We can now prove Theorem~\ref{thm:B}.

\begin{proof}[Proof of Theorem \ref{thm:B}]
We divide the proof into several steps.

\medskip

\noindent\textbf{Step 1}: \textit{\((B,+)\) has a normal subgroup \(L\) of index at most
\(2\), where $L\simeq\operatorname{PSL}_2(p)$ or $L\simeq\operatorname{SL}_2(p).$}

\medskip

By Proposition~\ref{lem:additive-almost-Sylow-cyclic}, every Sylow subgroup
of odd order of \((B,+)\) is cyclic, while a Sylow \(2\)-subgroup of
\((B,+)\) has a cyclic subgroup of index at most \(2\). Since \((B,+)\) is insoluble, Theorem~\ref{thm:Suzuki-Wong} provides a
normal subgroup \(G_1\) of \((B,+)\) satisfying
$[(B,+):G_1]\leq2$
and $G_1=L\times M,$
where $L\simeq\operatorname{PSL}_2(p)$ or $L\simeq\operatorname{SL}_2(p)$
for some prime \(p\geq5\), while \(M\) is a \(Z\)-group such that $\gcd(|L|,|M|)=1.$
By Lemma~\ref{lem:Suzuki-Wong-characteristic-factor}, the subgroup
\(M\) is characteristic in \((B,+)\).
By Lemma~\ref{lem:Suzuki-Wong-characteristic-factor} we have $\gcd\left(
|M|,
\left|
\operatorname{Aut}\bigl((B,+)/M\bigr)
\right|
\right)=1.$
Therefore, by Proposition~\ref{IdealCoprimeAut}, the
subgroup \(M\) is an ideal of \(B\).
Since \(B\) is simple, either \(M=0\) or \(M=B\). The latter possibility
is impossible because \((B,+)/M\) contains the non-abelian group \(L\).
Thus $M=0.$ Consequently, \((B,+)\) has a normal subgroup \(L\) of index at most
\(2\), where $L\simeq\operatorname{PSL}_2(p)$ or $L\simeq\operatorname{SL}_2(p).$

\medskip

\noindent\textbf{Step 2}: \textit{\((B,+) \simeq \operatorname{PSL}_2(p)\) or \((B,+) \simeq \operatorname{SL}_2(p)\)}

\medskip

Suppose that
$[(B,+):L]=2.$ Since \(L\) is perfect and \((B,+)/L\) is abelian, we have $(B,+)'\leq L$
and $L=L'\leq(B,+)'.$ Thus \(L=(B,+)'\), and hence \(L\operatorname{char}(B,+)\).
Therefore \(L\) is a left ideal of \(B\) and so, by Proposition \ref{lem:characteristic-left-ideal}, \(L\leq(B,\cdot)\).
Since \(L\) has index \(2\) in \((B,\cdot)\), it is normal in
\((B,\cdot)\). Hence \(L\) is an ideal of \(B\), contradicting the
simplicity of \(B\). Therefore \((B,+)=L\).

\medskip

\noindent\textbf{Step 3}: \textit{The case $(B,+)\simeq\operatorname{SL}_2(p)$ cannot happen}

\medskip

Let $K=Z(B,+).$ Since \((B,+)\simeq\operatorname{SL}_2(p)\), we have
$K\simeq C_2.$
Moreover, \(K\operatorname{char}(B,+)\), and hence \(K\) is a left
ideal of \(B\). In particular, by Proposition \ref{lem:characteristic-left-ideal}, $K\leq(B,\cdot).$
Choose \(P\in\operatorname{Syl}_2(B,\cdot)\) such that \(K\leq P\).
Since \((B,\cdot)\) is a \(Z\)-group, \(P\) is cyclic. Write $|P|=2^a.$
Since the additive and multiplicative groups have the same order, by Lemma \ref{lem:linear-groups-degree-two}, we have $2^a
=
|\operatorname{SL}_2(p)|_2
=
|\operatorname{PGL}_2(p)|_2.$
The restriction of the lambda map to \(P\) gives a homomorphism
\[
\lambda|_P:
P\longrightarrow\operatorname{Aut}(B,+)
\simeq\operatorname{Aut}(\operatorname{SL}_2(p))
\simeq\operatorname{PGL}_2(p).
\]
By Lemma \ref{lem:linear-groups-degree-two}, a Sylow \(2\)-subgroup of \(\operatorname{PGL}_2(p)\) is dihedral of
order \(2^a\). Consequently, every cyclic \(2\)-subgroup of
\(\operatorname{PGL}_2(p)\) has order at most \(2^{a-1}\). Since
\(\lambda(P)\) is cyclic, it follows that $|\lambda(P)|\leq2^{a-1}.$
Therefore $|\ker(\lambda|_P)|\geq2.$
The cyclic group \(P\) has a unique subgroup of order \(2\). Since
\(K\leq P\) and \(|K|=2\), this subgroup is \(K\). Hence $K\leq\ker\lambda.$ Since $K\leq Z(B,+)$ and $K\leq\ker\lambda,$
we have $K\leq Z(B,+)\cap\ker\lambda=\operatorname{Soc}(B).$
Moreover, \(K\operatorname{char}(B,+)\), and hence \(K\) is invariant
under the lambda action. Therefore
Proposition~\ref{prop:subbrace-socle-ideal} implies that \(K\) is an
ideal of \(B\). Since \(K\) is non-trivial and proper, this contradicts
the simplicity of \(B\).

\medskip

\noindent\textbf{Step 4}: \textit{Conclusion of the proof.}

\medskip

Combining the previous steps, the only remaining possibility is $(B,+)\simeq \operatorname{PSL}_2(p)$
for some prime \(p\geq 5\). This completes the proof.
\end{proof}

\begin{Example} \rm \label{ex:Tsang-PSL2-Z-group}
The conclusion of Theorem~\ref{thm:B} is
actually attained for every prime \(p\geq5\).
Let $N=\operatorname{PSL}_2(p),$
regarded as the inner automorphism group of \(N\) inside
\(\operatorname{PGL}_2(p)\leq\operatorname{Aut}(N)\). As described in
\cite[Proof of Theorem~1.3]{TsangSimpleType}, the group
\(\operatorname{PGL}_2(p)\) admits an exact factorization
\[
\operatorname{PGL}_2(p)=AB,
\qquad
A\cap B=1,
\]
where \(A\) is a Singer cycle of order \(p+1\) and \(B\) is the
stabilizer of a point of the projective line. In particular, $A\simeq C_{p+1}$ and $B\simeq C_p\rtimes C_{p-1}.$ Set $A_0=A\cap N$ and $B_0=B\cap N.$
Then
\[
|A_0|=\frac{p+1}{2}
\qquad\text{and}\qquad
|B_0|=\frac{p(p-1)}{2}.
\]

Tsang's construction yields a regular subgroup of
\(\operatorname{Hol}(N)\) isomorphic to
\[
G_p=
\begin{cases}
A_0\rtimes_{\alpha} B, & p\equiv1\pmod4,\\[2mm]
B_0\rtimes_{\alpha} A, & p\equiv3\pmod4,
\end{cases}
\]
for a suitable homomorphism \(\alpha\). Consequently, there exists a
skew brace \(B_p\) such that
\[
(B_p,+)\simeq\operatorname{PSL}_2(p)
\qquad\text{and}\qquad
(B_p,\cdot)\simeq G_p.
\]

We claim that \(G_p\) is a \(Z\)-group. If \(p\equiv1\pmod4\), then $A_0\simeq C_{(p+1)/2}$, $B\simeq C_p\rtimes C_{p-1},$
and
\[
\gcd\left(\frac{p+1}{2},p(p-1)\right)=1.
\]
Both \(A_0\) and \(B\) are \(Z\)-groups, and their orders are coprime.
Hence every Sylow subgroup of \(A_0\rtimes_{\alpha}B\) is cyclic.
Similarly, if \(p\equiv3\pmod4\), then $B_0\simeq C_p\rtimes C_{(p-1)/2},$ $A\simeq C_{p+1},$
and
\[
\gcd\left(\frac{p(p-1)}{2},p+1\right)=1.
\]
Thus every Sylow subgroup of \(B_0\rtimes_{\alpha}A\) is cyclic. Therefore, for every prime \(p\geq5\), there exists a skew brace \(B_p\)
such that
\[
(B_p,+)\simeq\operatorname{PSL}_2(p)
\]
and \((B_p,\cdot)\) is a \(Z\)-group. Since
\(\operatorname{PSL}_2(p)\) is simple, the skew brace \(B_p\) is
simple.
\end{Example}

Although this construction provides an infinite family of examples, a
classification of all skew braces with additive group
\(\operatorname{PSL}_2(p)\) and multiplicative group a \(Z\)-group
appears considerably more difficult. Indeed, such a classification
would require determining, up to conjugation by
\(\operatorname{Aut}(\operatorname{PSL}_2(p))\), all regular
\(Z\)-subgroups of
\(\operatorname{Hol}(\operatorname{PSL}_2(p))\).

\section{Some splitting criteria} \label{Section Splitting criteria}

In this section, we weaken the assumption that the multiplicative group is a \(Z\)-group and study finite skew braces for which either the additive or the multiplicative group has a cyclic Sylow subgroup corresponding to the smallest prime divisor of the order. In \cite[Theorem~B]{DameleZGroups}, it was proved that every finite skew brace $B$ whose additive group is a \(Z\)-group is supersoluble. Consequently, if \(p\) is the smallest prime divisor of \(|B|\), then \(B\) admits a decomposition \(B\simeq H\rtimes P\), where \(H\) is a Hall \(p'\)-ideal of \(B\) and \(P\) is a Sylow \(p\)-subbrace. Theorem \ref{thm:C} extends this splitting result beyond skew braces of additive \(Z\)-group type. 

\begin{proof}[Proof of Theorem \ref{thm:C}] We consider the three cases separately.

\medskip 

\noindent\textbf{Case 1}: \textit{A Sylow \(p\)-subgroup of \((B,+)\) is cyclic.}

\medskip

Let $P_0\in \Syl_p(B,+)$ be cyclic. By Burnside $p$-complement theorem \cite[Theorem 2.6]{Huppert1967}, the additive group $(B,+)$ has a normal $p$-complement $H$. The subgroup \(H\) has odd order and is therefore soluble by the
Feit-Thompson theorem (see the main Theorem of \cite{FeitThompson1963}). Since \((B,+)/H\) is a cyclic \(p\)-group,
the additive group \((B,+)\) is soluble. Therefore by Hall Theorem for solvable groups \cite[Theorem 1.7]{Huppert1967} $H$ is characteristic in $(B,+)$. In particular $H$ is $\lambda$-invariant. We prove that $H$ is an ideal of $B$. It remains to prove that $H$ is normal in $(B,\cdot)$. Since \(H\) is \(\lambda\)-invariant, we may consider the generalized lambda
map associated with \(H\): $\lambda_H:(B,\cdot)\longrightarrow \operatorname{Aut}((B,+)/H)$.
The quotient \((B,+)/H\) is isomorphic to a cyclic \(p\)-group. Say $(B,+)/H\simeq C_{p^a}.$
Then
\[
|\operatorname{Aut}((B,+)/H)|=|\operatorname{Aut}(C_{p^a})|
=p^{a-1}(p-1).
\]
We claim that $H\leq \ker(\lambda_H).$
Indeed, the subgroup \(\lambda_H(H)\) has order dividing \(|H|\). On the other
hand, $\lambda_H(H)\leq \operatorname{Aut}((B,+)/H).$
Since \(p\nmid |H|\), and every prime divisor of \(p-1\) is smaller than \(p\),
while \(p\) is the smallest prime divisor of \(|B|\), we have $\gcd\bigl(|H|,|\operatorname{Aut}((B,+)/H)|\bigr)=1.$ Therefore 
by Proposition~\ref{IdealCoprimeAut}, it follows that \(H\) is an ideal of \(B\).
We complete the proof using the Schur--Zassenhaus theorem for finite skew braces
\cite[Theorem A]{DameleSchurZassenhaus2026}. See also \cite{FerraraTrombettiSchurZassenhaus2026}.
\medskip

\noindent\textbf{Case 2}: \textit{\(p\) is odd and a Sylow \(p\)-subgroup of \((B,\cdot)\) is cyclic.} 

\medskip 

By \cite[Theorem~2.1]{TrumanSylow}, the skew brace \(B\) has a Sylow \(p\)-subbrace \(P\). Thus $(P,\cdot)\in\operatorname{Syl}_p((B,+)).$ and \((P,+)\in\operatorname{Syl}_p((B,+)).\). By hypothesis, \((P,\cdot)\) is cyclic. By Theorem~\ref{Connection}, the group \((P,\cdot)\) can be identified with a regular subgroup of \(\operatorname{Hol}(P,+)\). In particular, \(\operatorname{Hol}(P,+)\) contains an element of order \(|P|\). Since \(p\) is odd, Theorem~\ref{thm:Kohl-cyclic-element} implies that \((P,+)\) is cyclic. The conclusion now follows from Case~1.

\medskip 

\noindent\textbf{Case 3}: \textit{\(p=2\), \(3\nmid |B|\), and a Sylow \(2\)-subgroup of \((B,\cdot)\) is cyclic.}

\medskip

Let \(P\) be a Sylow \(2\)-subbrace of \(B\). Then \((P,\cdot)\) is cyclic. By Theorems~\ref{Connection} and~\ref{thm:Byott-cyclic-element}, the group \((P,+)\) is cyclic, dihedral, generalized quaternion, or isomorphic to \(C_2\times C_2\). In particular, \((P,+)\) is metacyclic. Since \(3\nmid |B|\), Huppert's theorem \cite[Kapitel~IV, Satz~5.11]{Huppert1967}  implies that \((B,+)\) is \(2\)-nilpotent. Hence \((B,+)\) has a normal Hall \(2'\)-subgroup \(H\). The normal Hall \(2'\)-subgroup \(H\) has odd order and is soluble by
the Feit-Thompson theorem (see the main Theorem of \cite{FeitThompson1963}). Since \((B,+)/H\) is a \(2\)-group,
\((B,+)\) is soluble. Therefore by Hall Theorem for soluble groups \cite[Theorem 1.7]{Huppert1967} $H$ is characteristic in $(B,+)$. In particular $H$ is $\lambda$-invariant. We prove that $H$ is an ideal of $B$. Since a Sylow \(2\)-subgroup of \((B,\cdot)\) is cyclic, the Burnside normal \(2\)-complement theorem \cite[Theorem 2.6]{Huppert1967} implies that \((B,\cdot)\) has a normal Hall \(2'\)-subgroup \(N\). Since \(N\) is a normal Hall \(2'\)-subgroup of \((B,\cdot)\), it is
the unique Hall \(2'\)-subgroup. Therefore \(H=N\). Consequently $H\trianglelefteq(B,\cdot).$ Thus \(H\) is an ideal of \(B\). We complete the proof using the Schur--Zassenhaus theorem for finite skew braces
\cite[Theorem A]{DameleSchurZassenhaus2026}. See also \cite{FerraraTrombettiSchurZassenhaus2026}.

\end{proof}

\begin{Corollary}\label{cor:unified-simple-cyclic-Sylow}

Let \(B\) be a finite simple skew brace, and let \(p\) be the smallest prime divisor of \(|B|\). Suppose that one of the following conditions holds:

\begin{enumerate} 

\item a Sylow \(p\)-subgroup of \((B,+)\) is cyclic; 
\item \(p\) is odd and a Sylow \(p\)-subgroup of \((B,\cdot)\) is cyclic; 
\item \(p=2\), \(3\nmid |B|\), and a Sylow \(2\)-subgroup of \((B,\cdot)\) is cyclic. \end{enumerate} 
Then \(B\simeq C_p\).
\end{Corollary}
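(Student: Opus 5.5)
The corollary should follow directly from Theorem~\ref{thm:C}. Suppose \(B\) is a finite simple skew brace and one of the three conditions holds. Then Theorem~\ref{thm:C} gives a Hall \(p'\)-ideal \(H\) of \(B\) and a Sylow \(p\)-subbrace \(P\) with \(B\simeq H\rtimes P\). Since \(B\) is simple, either \(H=0\) or \(H=B\).

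If \(H=B\), then \(|B|\) is coprime to \(p\). This contradicts the choice of \(p\) as a prime divisor of \(|B|\), since \(|B|>1\) by the definition of simplicity. So \(H=0\), and \(B=P\) has prime-power order \(p^m\).

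Now apply \cite[Proposition~4.4]{CedoSmoktunowiczVendramin2019}, as in Step~1 of the proof of Theorem~\ref{thm:A}: a simple skew brace of prime-power order has order \(p\). It remains to identify a skew brace of order \(p\). Both \((B,+)\) and \((B,\cdot)\) are then isomorphic to \(C_p\). The lambda map \(\lambda:(B,\cdot)\to\operatorname{Aut}(C_p)\simeq C_{p-1}\) has image of order dividing \(\gcd(p,p-1)=1\), so it is trivial. Hence \(a\cdot b=a+\lambda_a(b)=a+b\), and \(B\) is the trivial skew brace on \(C_p\). Conversely, this brace is simple, because a group of prime order has no non-trivial proper subgroups.

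There is essentially no obstacle; the only point needing care is the appeal to the prime-power simplicity result. If one wished to avoid citing it, one could instead use the standard fact that a nonzero skew brace of \(p\)-power order has nonzero socle, or at least a nonzero ideal of order \(p\). Either fact forces \(|B|=p\) by simplicity.
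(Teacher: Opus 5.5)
Your proof is correct and is exactly the paper's argument. Theorem~C gives the Hall $p'$-ideal $H$; simplicity together with $p\mid |B|$ forces $H=0$; and the prime-power result of Ced\'o--Smoktunowicz--Vendramin gives $|B|=p$, with your check that the brace of order $p$ is trivial a harmless extra. Drop the closing aside, however: a nonzero brace of $p$-power order can have zero socle (regular subgroups of $\mathrm{AGL}_n(p)$ meeting the translation subgroup trivially give such braces), so nonzero socle is not a valid substitute for the citation. The fact actually available is left nilpotency, which makes $B*B$ a proper ideal.
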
 

\begin{proof} 
By Theorem~\ref{thm:C}, the skew brace \(B\) contains a Hall \(p'\)-ideal \(H\) and a Sylow \(p\)-subbrace \(P\) such that \(B\simeq H\rtimes P\). Since \(B\) is simple and \(H\neq B\), we have \(H=0\). Thus \(B=P\), and hence \(|B|\) is a power of \(p\). By \cite[Proposition~4.4]{CedoSmoktunowiczVendramin2019}, a finite simple skew brace of prime-power order has order \(p\). Therefore \(B\simeq C_p\).
\end{proof}

\begin{Corollary}\label{cor:order-np-not-simple} 

Let \(B\) be a finite skew brace of order \(np\), where \(\gcd(n,p)=1\), \(n>1\), and \(p\) is the smallest prime divisor of \(|B|\). Then \(B\) is not simple. 

\end{Corollary}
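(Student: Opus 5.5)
The plan is to reduce to Corollary~\ref{cor:unified-simple-cyclic-Sylow} under hypothesis~(1). Since \(|B|=np\) with \(\gcd(n,p)=1\), the prime \(p\) divides \(|B|\) exactly once. Hence a Sylow \(p\)-subgroup of \((B,+)\) has order \(p\), and so it is cyclic. Thus condition~(1) of Theorem~\ref{thm:C} (equivalently, of Corollary~\ref{cor:unified-simple-cyclic-Sylow}) holds, with no restriction on the parity of \(p\) and no condition involving the prime \(3\).

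Now suppose, for a contradiction, that \(B\) is simple. Corollary~\ref{cor:unified-simple-cyclic-Sylow} then gives \(B\simeq C_p\), so \(|B|=p\). This contradicts \(|B|=np\) with \(n>1\). Therefore \(B\) is not simple.

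For a more explicit argument, one can bypass the corollary and use Theorem~\ref{thm:C} directly. The theorem produces a Hall \(p'\)-ideal \(H\) of order \(n\). Since \(n>1\), \(H\) is non-zero, and since \(p\) divides \(|B|\) but not \(|H|\), it is proper. So \(H\) is a non-zero proper ideal of \(B\), and \(B\) is not simple. In either form there is no real obstacle: the only point to check is that a group of prime order is cyclic, so that case~(1) applies in full generality.
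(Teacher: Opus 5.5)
Your proposal is correct and follows the paper's proof: a Sylow \(p\)-subgroup of \((B,+)\) has order \(p\) and is therefore cyclic, so hypothesis (1) of Corollary~\ref{cor:unified-simple-cyclic-Sylow} holds and simplicity would force \(B\simeq C_p\), contradicting \(n>1\). Your alternative via Theorem~\ref{thm:C} is the same argument unpacked: the Hall \(p'\)-ideal \(H\) has order \(n\), so it is a non-zero proper ideal.
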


\begin{proof}
A Sylow \(p\)-subgroup of \((B,+)\) has order \(p\), and is therefore
cyclic. The result follows from Corollary \ref{cor:unified-simple-cyclic-Sylow}.
\end{proof}

\begin{Example} \rm \label{case 3 divides |B|}
    The hypothesis \(3\nmid |B|\) cannot be omitted. Indeed, the simple skew brace $S=S_{12,22}$ constructed  in
\cite{KonovalovSmoktunowiczVendramin} has order \(12\), additive group
isomorphic to \(A_4\), and multiplicative group isomorphic to
\(C_3\rtimes C_4\). In particular, a Sylow \(2\)-subgroup of
$(S,\cdot)$ is cyclic. Since $S$ is simple, it has no
non-trivial proper ideals and therefore cannot admit a decomposition
\(S\simeq H\rtimes P\), where \(H\) is a Hall \(2'\)-ideal and
\(P\) is a Sylow \(2\)-subbrace.
Moreover, the failure of such a splitting result is not confined to skew braces of order \(12\). Let \(T\) be any skew brace of odd order and consider the direct product $B=S\times T$. Then \((B,+)\) is soluble and a Sylow \(2\)-subgroup of \((B,\cdot)\) is still cyclic of order \(4\). Nevertheless, \(B\) has no Hall \(2'\)-ideal. Indeed, if \(H\) were such an ideal, then its image under the canonical projection \(B\to S\) would be an ideal of \(S\). Since \(S\) is simple and \(H\) has odd order, this image must be trivial. Hence \(H\leq T\), which is impossible because $|H|=|B|_{2'}=3|T|>|T|.$ Thus, even in arbitrarily large orders, no general splitting theorem of this form can hold when \(3\mid |B|\).

\end{Example}

\begin{Remark} \rm
At first sight, the splitting results of this section might suggest
that the assumption that \((B,\cdot)\) is a \(Z\)-group in
Theorems~\ref{thm:A} and~\ref{thm:B} is unnecessarily strong. Indeed, if
\(p\) is the smallest prime divisor of \(|B|\), then the existence of
a cyclic Sylow \(p\)-subgroup often forces a decomposition of \(B\) as
a semidirect product of a Hall \(p'\)-ideal and a Sylow
\(p\)-subbrace.
There is, however, an essential difference between the odd and even
cases. If \(p\) is odd and a Sylow \(p\)-subgroup of \((B,\cdot)\) is
cyclic, then the corresponding additive Sylow subgroup is cyclic as
well, and Theorem~\ref{thm:C} yields the desired splitting. If \(p=2\),
the cyclicity of a Sylow \(2\)-subgroup of \((B,\cdot)\) only implies
that a Sylow \(2\)-subgroup of \((B,+)\) is cyclic, dihedral,
generalized quaternion, or isomorphic to \(C_2\times C_2\). This is
sufficient to obtain a normal \(2'\)-complement when
\(3\nmid |B|\), but no analogous splitting theorem holds in general
when \(3\mid |B|\), as shown by Example~\ref{case 3 divides |B|}.
Moreover, in the latter situation the cyclicity of a multiplicative
Sylow \(2\)-subgroup gives no control over the Sylow subgroups of odd
order of \((B,+)\). This is precisely the additional information
provided by the assumption that \((B,\cdot)\) is a \(Z\)-group: every
odd Sylow subgroup of the additive group is then cyclic. Consequently,
extending the simplicity results of Section~\ref{Section simple multiplicative Z group} to skew braces
whose multiplicative group merely has a cyclic Sylow \(2\)-subgroup
appears to require substantially different arguments, especially when
\(3\mid |B|\).
\end{Remark}

We recall the following conjecture, suggested by an observation of
Byott in \cite[end of Section~1]{ByottSolubility2015}. Here, the type
of a Hopf--Galois structure is the isomorphism type of the associated
regular group.

\begin{Conjecture}[Byott]\label{conj:Byott-solubility}
Let \(L/K\) be a finite Galois extension. If \(L/K\) admits a
Hopf--Galois structure of soluble type, then its Galois group
\(\operatorname{Gal}(L/K)\) is soluble.
\end{Conjecture}

By the correspondence between Hopf--Galois structures on finite
Galois extensions and regular subgroups of holomorphs, this is
equivalent to the following skew-brace formulation: if \(B\) is a
finite skew brace and its additive group \((B,+)\) is soluble, then
its multiplicative group \((B,\cdot)\) is soluble. Indeed, the
additive group is the type of the corresponding Hopf--Galois
structure, whereas the multiplicative group is isomorphic to the
Galois group. Byott's conjecture remains open in full generality and continues to
be the subject of considerable research. Several partial results have
been obtained under additional assumptions on the order or on the
structure of the underlying groups; see, for instance,
\cite{GorshkovNasybullovSolvable,TsangQinSolvability,
ByottInsolubleTransitive,TrappeniersTwoSided}. Topological and
Lie-theoretic analogues of the conjecture have also recently been
investigated; see
\cite{DameleLoiTopologicalSolvability,DameleLoiLieSkewBraces}. Theorem~\ref{thm:C} gives a positive answer to the skew-brace
formulation of the conjecture when the additive group has a cyclic
Sylow \(2\)-subgroup.

\begin{Corollary}\label{cor:Byott-cyclic-additive-Sylow}
Let \(B\) be a finite skew brace such that a Sylow \(2\)-subgroup of
\((B,+)\) is cyclic. Then \((B,\cdot)\) is soluble.
\end{Corollary}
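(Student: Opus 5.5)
If \(|B|\) is odd, \((B,\cdot)\) is soluble by the Feit--Thompson theorem, so I will assume \(|B|\) is even. Then \(p=2\) is the smallest prime divisor of \(|B|\). By hypothesis a Sylow \(2\)-subgroup of \((B,+)\) is cyclic, so case (1) of Theorem~\ref{thm:C} applies. It gives a Hall \(2'\)-ideal \(H\) and a Sylow \(2\)-subbrace \(P\) with \(B\simeq H\rtimes P\).

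The key consequence I will extract is that \(H\) is an ideal, hence a normal subgroup of \((B,\cdot)\). Its multiplicative order is \(|H|\), which is odd, and \(H\) is also a subbrace, so \((H,\cdot)\) has odd order. By Feit--Thompson, \((H,\cdot)\) is soluble.

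Next I consider the quotient \((B,\cdot)/(H,\cdot)\), which is the multiplicative group of the quotient skew brace \(B/H\). It has order \([B:H]=|P|\), a power of \(2\). A finite \(2\)-group is nilpotent, so this quotient is soluble.

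Since \((B,\cdot)\) has a soluble normal subgroup \(H\) with soluble quotient, \((B,\cdot)\) is soluble by closure of solubility under extensions. Alternatively, \((B,\cdot)\simeq (H,\cdot)\rtimes(P,\cdot)\) by the splitting, which reaches the same conclusion.

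There is essentially no obstacle here. The only points to check are that the ideal property gives normality of \(H\) in \((B,\cdot)\), which holds by definition of ideal, and the trivial odd-order case. All the substantial work is already in Theorem~\ref{thm:C}, Case~1. That case used only Burnside's normal \(p\)-complement theorem and the coprimality \(\gcd(|H|,|\operatorname{Aut}(C_{2^a})|)=1\), which holds because \(|\operatorname{Aut}(C_{2^a})|\) is a power of \(2\).
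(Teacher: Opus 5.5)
Your proposal is correct and is essentially the paper's proof: apply Theorem~\ref{thm:C}, Case~(1), with \(p=2\), then use Feit--Thompson on the odd-order Hall ideal \(H\) and the fact that the remaining factor is a \(2\)-group. The paper phrases this via \((B,\cdot)\simeq(H,\cdot)\rtimes(P,\cdot)\), while your extension argument needs only the ideal \(H\) and not the Schur--Zassenhaus complement; your separate treatment of odd \(|B|\) also makes explicit a point the paper leaves implicit, namely that Theorem~\ref{thm:C} requires \(2\) to be the smallest prime divisor of \(|B|\).
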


\begin{proof}
By Theorem~\ref{thm:C}, the skew brace \(B\) contains a Hall
\(2'\)-ideal \(H\) and a Sylow \(2\)-subbrace \(P\) such that $B\simeq H\rtimes P.$
In particular $(B,\cdot)\simeq (H,\cdot)\rtimes(P,\cdot).$
The group \((H,\cdot)\) has odd order and is therefore soluble by the
Feit--Thompson theorem; see the main theorem of
\cite{FeitThompson1963}. Moreover, \((P,\cdot)\) is a \(2\)-group and
is therefore soluble. Consequently, \((B,\cdot)\) is soluble.
\end{proof}

Through the correspondence with Hopf--Galois structures, we obtain
the following equivalent consequence.

\begin{Corollary}\label{cor:Byott-Hopf-Galois-cyclic-Sylow}
Let \(L/K\) be a finite Galois extension admitting a Hopf--Galois
structure of type \(N\). If a Sylow \(2\)-subgroup of \(N\) is cyclic,
then \(\operatorname{Gal}(L/K)\) is soluble.
\end{Corollary}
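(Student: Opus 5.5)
The statement is Corollary~\ref{cor:Byott-Hopf-Galois-cyclic-Sylow}. I will prove it by translating it into skew-brace language and then applying Corollary~\ref{cor:Byott-cyclic-additive-Sylow}.

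\textbf{Step 1: from Hopf--Galois data to a regular subgroup.} Let \(G=\operatorname{Gal}(L/K)\). By Greither--Pareigis theory, a Hopf--Galois structure of type \(N\) on \(L/K\) is a regular subgroup of \(\operatorname{Perm}(G)\) isomorphic to \(N\) and normalized by the left regular representation of \(G\). Byott's translation then gives a regular embedding \(G\hookrightarrow\operatorname{Hol}(N)\). Here ``type'' means the isomorphism class of that regular subgroup, as in the conventions recalled before Conjecture~\ref{conj:Byott-solubility}.

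\textbf{Step 2: from the regular subgroup to a skew brace.} By Theorem~\ref{Connection}, a regular subgroup of \(\operatorname{Hol}(N)\) isomorphic to \(G\) yields a skew brace \(B\) with
\[
(B,+)\simeq N \qquad\text{and}\qquad (B,\cdot)\simeq G.
\]
This is the only step that needs outside input, namely the Greither--Pareigis and Byott correspondence. The paper has already asserted this equivalence, so I would cite it rather than reprove it.

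\textbf{Step 3: conclusion.} Since a Sylow \(2\)-subgroup of \(N\simeq(B,+)\) is cyclic, Corollary~\ref{cor:Byott-cyclic-additive-Sylow} shows that \((B,\cdot)\) is soluble. Hence \(\operatorname{Gal}(L/K)\simeq(B,\cdot)\) is soluble.

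This includes the case where \(|N|\) is odd, since the trivial Sylow \(2\)-subgroup is cyclic. In that case Theorem~\ref{thm:C} is applied with smallest prime \(p\) odd. Alternatively, the conclusion follows immediately from Feit--Thompson, because \(|G|=|N|\) is then odd.
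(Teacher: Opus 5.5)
Your proof is correct and follows the paper's argument exactly: you translate the Hopf--Galois structure into a skew brace \(B\) with \((B,+)\simeq N\) and \((B,\cdot)\simeq\operatorname{Gal}(L/K)\), then invoke Corollary~\ref{cor:Byott-cyclic-additive-Sylow}. One correction to your closing aside: when \(|N|\) is odd, Theorem~\ref{thm:C} cannot be applied with the smallest (odd) prime \(p\), because nothing forces a Sylow \(p\)-subgroup to be cyclic, so your Feit--Thompson alternative is the right argument there (it also covers the odd-order case that the paper's proof of Corollary~\ref{cor:Byott-cyclic-additive-Sylow}, which invokes Theorem~\ref{thm:C} with \(p=2\), does not address explicitly).
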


\begin{proof}
The given Hopf--Galois structure determines a finite skew brace \(B\)
such that $(B,+)\simeq N$ and $(B,\cdot)\simeq\operatorname{Gal}(L/K).$
Since a Sylow \(2\)-subgroup of \((B,+)\) is cyclic,
Corollary~\ref{cor:Byott-cyclic-additive-Sylow} implies that
\((B,\cdot)\), and hence \(\operatorname{Gal}(L/K)\), is soluble.
\end{proof}

\end{document}